\title{\vspace{-35pt}Pseudo-Anosov dilatations and the Johnson filtration}
\author{Justin Malestein\footnote{Supported in part by ERC grant agreement no 226135}\ \ and Andrew Putman\footnote{Supported in part by NSF grant DMS-1255350 and the Alfred P.\ Sloan Foundation}}
\date{February 2, 2015}
\theoremstyle{plain}
\newtheorem{theorem}{Theorem}[section]
\newtheorem{maintheorem}{Theorem}
\newtheorem{lemma}[theorem]{Lemma}
\newtheorem{step}{Step}
\newcommand\BeginSteps{\setcounter{step}{0}}
\theoremstyle{definition}
\theoremstyle{remark}
\newtheorem*{remark}{Remark}
\DeclareMathOperator{\Mod}{Mod}
\newcommand\R{\ensuremath{\mathbb{R}}}
\newcommand\C{\ensuremath{\mathbb{C}}}
\newcommand\Z{\ensuremath{\mathbb{Z}}}
\newcommand\Field{\ensuremath{\mathbb{F}}}
\DeclareMathOperator{\HH}{H}
\DeclareMathOperator{\Out}{Out}
\newcommand\CaptionSpace{\hspace{0.2in}}
\newcommand\Set[2]{\ensuremath{\{\text{#1 $|$ #2}\}}}
\newcommand\Figure[3]{
\begin{figure}[t]
\centering
\centerline{\psfig{file=#2,scale=55}}
\caption{#3}
\label{#1}
\end{figure}}
\newcommand\FigureB[3]{
\begin{figure}[b]
\centering
\centerline{\psfig{file=#2,scale=55}}
\caption{#3}
\label{#1}
\end{figure}}
\newcommand\Johnson{\ensuremath{\mathcal{N}}}
\begin{document}

\maketitle

\vspace{-25pt}
\begin{abstract}
Answering a question of Farb--Leininger--Margalit, we give explicit lower bounds for the dilatations of pseudo-Anosov mapping classes
lying in the $k^{\text{th}}$ term of the Johnson filtration of the mapping class group.
\end{abstract}

\section{Introduction}

Let $\Mod(\Sigma)$ be the mapping class group of a closed orientable surface $\Sigma$.
Thurston's well-known classification of surface homeomorphisms (see \cite{FLP}) says that every element of $\Mod(\Sigma)$ is either finite order, reducible,
or pseudo-Anosov.  By definition, a pseudo-Anosov mapping class
$f \in \Mod(\Sigma)$ is one that can be represented by a homeomorphism $F : \Sigma \rightarrow \Sigma$ such that
there exist two transverse singular measured foliations
$\mathcal{F}^{\text{u}}$ and $\mathcal{F}^{\text{s}}$ and some $\lambda(f) > 1$ such that $F_{\ast}(\mathcal{F}^{\text{u}}) = \lambda(f) \cdot \mathcal{F}^{\text{u}}$
and $F_{\ast}(\mathcal{F}^{\text{s}}) = \frac{1}{\lambda(f)} \cdot \mathcal{F}^{\text{s}}$.  The number $\lambda(f)$ only depends
on $f$ and is known as the {\em dilatation} of $f$.  The dilatation $\lambda(f)$ of a pseudo-Anosov
mapping class $f$ shows up in many places; for instance, the number $\ln(\lambda(f))$ is the translation length
with respect to the Teichm\"{u}ller metric of the action of $f$ on Teichm\"{u}ller space.

\paragraph{Minimal dilatations.}
The set of possible dilatations of pseudo-Anosov mapping classes has many interesting properties.  Define
$$\text{Spec}(\Mod(\Sigma)) = \Set{$\ln(\lambda(f))$}{$f \in \Mod(\Sigma)$ is pseudo-Anosov} \subset \R.$$
Arnoux--Yoccoz \cite{ArnouxYoccoz} and Ivanov \cite{Ivanov} independently proved that $\text{Spec}(\Mod(\Sigma))$ is closed
and discrete, so it has a minimal element $L(\Mod(\Sigma))$.  Penner \cite{PennerDil} proved that there exists some $C > 1$
such that if $\Sigma$ has genus $g$, then
$$\frac{1}{C g} \leq L(\Mod(\Sigma)) \leq \frac{C}{g}.$$
In particular, $L(\Mod(\Sigma)) \rightarrow 0$ as $g \rightarrow \infty$.

\paragraph{Johnson filtration.}
The {\em Johnson filtration} is an important sequence of subgroups of $\Mod(\Sigma)$.  
Recall that if $G$ is a group, then the {\em lower central series} of $G$ is the inductively defined sequence
$$\gamma_1(G) = G  \quad \text{and} \quad \gamma_{d+1}(G) = [G,\gamma_d(G)] \text{ for } d \geq 1.$$
Letting $\pi_1 = \pi_1(\Sigma)$, the $k^{\text{th}}$ term of the Johnson filtration of $\Mod(\Sigma)$, denoted
$\Johnson_k(\Sigma)$, is the kernel of the natural representation $\Mod(\Sigma) \rightarrow \Out(\pi_1 / \gamma_{k+1}(\pi_1))$.
For example, since $\pi_1 / \gamma_2(\pi_1) \cong \HH_1(\Sigma;\Z)$, the group $\Johnson_1(\Sigma)$ is the 
{\em Torelli group}, that is, the kernel of the action of $\Mod(\Sigma)$ on $\HH_1(\Sigma;\Z)$.
The groups $\Johnson_k(\Sigma)$ were
first defined in \cite{JohnsonSurvey} and have connections to number theory (see \cite{Matsumoto})
and $3$-manifolds (see \cite{GaroufalidisLevine}); however, many basic questions about them remain open.

\paragraph{Dilatation in the Johnson filtration.}
If $H < \Mod(\Sigma)$ is a subgroup, then define
$$\text{Spec}(H) = \Set{$\ln(\lambda(f))$}{$f \in H$ is pseudo-Anosov} \subset \R_{>0}.$$
Possibly $\text{Spec}(H)$ is empty; otherwise, it must have a least element which we will denote by $L(H)$.
Farb--Leininger--Margalit \cite{FarbLeiningerMargalit} proved that if $\Sigma$ is a closed surface whose genus is at least $2$ and $k \geq 1$, then
$\text{Spec}(\Johnson_k(\Sigma))$ is nonempty.  They also proved that there exist numbers 
$n_k>0$ which depend only on $k$ such that $L(\Johnson_k(\Sigma)) \geq n_k$.
Observe that this contrasts sharply with Penner's theorem
for the whole mapping class group.  Finally, they proved that $n_k \rightarrow \infty$ as $k \rightarrow \infty$.

\paragraph{Main theorem.}
Farb--Leininger--Margalit's proof that $n_k \rightarrow \infty$ as $k \rightarrow \infty$ relies on a sort of
compactness argument and gives no estimates for $n_k$.  They posed the question of obtaining explicit estimates.
Our main theorem answers their question.  It says that $L(\Johnson_k(\Sigma))$ is bounded below by a quantity that
grows roughly like $\ln(k)$.  More precisely, we have the following.

\begin{maintheorem}
\label{maintheorem:dil}
Let $\Sigma$ be a closed surface whose genus is at least $2$.  Then for all $k \geq 1$ we have
$$L\left(\Johnson_k\left(\Sigma\right)\right) > \max\left(0.197, c \cdot \ln\left(\frac{k+3}{2}\right) - \ln\left(2\right)\right),$$
where $c = \frac{\ln(28/25)}{\ln(4)}$.
\end{maintheorem}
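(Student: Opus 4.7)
My plan is to extract an explicit rate from the compactness argument of Farb--Leininger--Margalit by combining it with quantitative spectral bounds for the action of a pseudo-Anosov $f \in \Johnson_k(\Sigma)$, or of its lifts, on the first homology of a carefully chosen finite cover of $\Sigma$. The starting point is the classical inequality: if $\widetilde{f}$ is any lift of a pseudo-Anosov $f$ to a finite regular cover $\widetilde{\Sigma} \to \Sigma$, then after replacing $f$ by a suitable power if necessary, $\widetilde{f}$ is again pseudo-Anosov with $\lambda(\widetilde{f}) = \lambda(f)$ and $\lambda(\widetilde{f}) \geq \rho\bigl(\widetilde{f}_\ast \mid \HH_1(\widetilde{\Sigma};\C)\bigr)$, where $\rho$ denotes spectral radius. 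Thus it suffices to exhibit, for each pseudo-Anosov $f \in \Johnson_k(\Sigma)$, a cover and a lift whose spectral radius on first homology grows like $\exp\bigl(c \ln((k+3)/2) - \ln 2\bigr)$.

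The natural candidates for such covers come from the mod-$p$ lower central series of $\pi_1(\Sigma)$; the appearance of $\ln 4$ in the denominator of $c$ strongly suggests $p = 2$. Specifically, I would construct a sequence of characteristic finite-index normal subgroups $M_j \triangleleft \pi_1(\Sigma)$ by iterating $p$-quotienting and commutator operations, giving covers $\widetilde{\Sigma}_j \to \Sigma$ of controlled degree. The membership $f \in \Johnson_k(\Sigma)$ forces the lift $\widetilde{f}_j$ to act trivially on $\pi_1(\widetilde{\Sigma}_j)$ modulo a deep layer of its own nilpotent filtration, which in turn puts algebraic constraints on the eigenvalues of $\widetilde{f}_{j\,\ast}$ acting on $\HH_1(\widetilde{\Sigma}_j;\C)$.

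The specific form of the constant $c = \ln(28/25)/\ln 4$ strongly suggests an iterative doubling scheme: each passage $\widetilde{\Sigma}_j \to \widetilde{\Sigma}_{j+1}$ is a degree-$2$ cover that multiplies the effective Johnson threshold by $4$ while forcing the spectral radius of the lifted action on homology to grow by at least a factor of $28/25$. After roughly $\log_4((k+3)/2)$ such rounds, the accumulated estimate on the spectral radius reaches the desired value. The universal constant $0.197$ must then come from a separate baseline argument covering small $k$, most plausibly a concrete lower bound on dilatations in the Torelli group $\Johnson_1(\Sigma)$.

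The hard part will be executing the inductive doubling step. Specifically, one needs to show that, under the hypothesis that $\widetilde{f}_j$ lies in an appropriately deep Johnson stratum of $\Mod(\widetilde{\Sigma}_j)$, the characteristic polynomial of $\widetilde{f}_{j+1\,\ast}$ on $\HH_1(\widetilde{\Sigma}_{j+1};\C)$ cannot be too close to $(X-1)^n$ without producing an archimedean eigenvalue at least $28/25$. Isolating this sharp factor looks like an algebraic-number-theoretic input of Kronecker/Smyth type: an integer matrix with spectral radius exactly $1$ must have all eigenvalues roots of unity, so a quantitative deviation from that rigidity is what yields the multiplicative improvement at each step. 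Tying this together with the combinatorics of the lower central series on a double cover, so that one actually gains a factor of $4$ in depth and not some worse factor, is where the most delicate computations are likely to live.
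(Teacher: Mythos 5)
Your proposal takes a genuinely different route, and it has a real gap. The paper does \emph{not} go through spectral radii on homology of covers. Instead, it bounds the geometric intersection number $i(f(\delta),\delta)$ from below for every nonnullhomotopic simple closed curve $\delta$ and then invokes Proposition~2.7 of Farb--Leininger--Margalit, which converts a uniform lower bound $i(f(\delta),\delta)\geq n\geq 3$ into $\lambda(f) > n/2$. The intersection lower bound is the content of Theorem~\ref{maintheorem:isect}, which in turn rests on Lemma~\ref{lemma:resolveallisect}: given two nonisotopic geodesics $\alpha,\beta$ that agree in $\pi_1/\gamma_d$, one builds a tower of regular degree-$2$ covers, of controlled length, that ``resolves'' the intersections. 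Each pair of $2$-covers in the tower kills at least $3/28$ of the remaining intersections; the key per-step estimate is purely a probabilistic counting fact about linear maps $\Field_2^n\to\Field_2$ (a random nonzero functional is nontrivial on both summands of a direct sum decomposition with probability at least $3/7$). The constant $28/25$ is the reciprocal of $1 - \tfrac{1}{2}\cdot\tfrac{3}{7}\cdot\tfrac{1}{2}$ arising from this counting; it has nothing to do with eigenvalues of integer matrices. The bound $0.197$ is indeed imported from Farb--Leininger--Margalit's estimate on $L(\Johnson_1(\Sigma))$, as you guessed.

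The gap in your route is the inductive doubling step, which you explicitly leave unproved, and I don't think it can be salvaged in the form you describe. Two problems. First, for a pseudo-Anosov $f\in\Johnson_k(\Sigma)$ and a finite cover $\widetilde\Sigma\to\Sigma$, there is no a priori reason the lifted action on $\HH_1(\widetilde\Sigma;\C)$ has spectral radius bounded away from $1$; constructing covers where this fails to collapse is essentially as hard as the original problem, and whether it can always be done is connected to deep open questions. Second, the Kronecker/Smyth heuristic cannot give a uniform multiplicative gain like $28/25$: Kronecker's theorem only says that an algebraic integer with all conjugates on the unit circle is a root of unity, and quantifying the gap away from $1$ is the content of Lehmer's problem, which is open. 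Even conditionally, you would still need to rule out that the lifted action is (or is close to) unipotent on homology, and nothing in the hypothesis $f\in\Johnson_k$ directly prevents that on an arbitrary cover. The paper sidesteps both issues by never touching homological spectral radius: the passage from Johnson-filtration depth to a numerical lower bound happens entirely at the level of curves, via the covering-space ``resolution'' of intersections and the elementary fact (Lemmas~\ref{lemma:detectnilpotent} and \ref{lemma:passtonormal}) that a tower of $k$ degree-$2$ covers sits inside a normal $2$-group cover of nilpotency class at most $2^k-2$.
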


\begin{remark}
In \cite[\S 4]{FarbLeiningerMargalit}, Farb--Leininger--Margalit prove that $L(\Johnson_k(\Sigma))$ is bounded above
by a function which is exponential in $k$ (independent of the genus).  Letting $F$ be a free group
of rank $2$, the key input to their construction is a sequence of nontrivial
elements $w_k \in \gamma_k(F)$ whose
word lengths are exponential in $k$.  In fact, there are nontrivial elements of $\gamma_k(F)$ whose word
lengths are quadratic in $k$ (see the remarks after \cite[Theorem 2]{MalesteinPutman}).  Plugging
these into their construction, one can deduce that $L(\Johnson_k(\Sigma))$ is bounded above
by a function which is quadratic in $k$ (independent of the genus).
\end{remark}

\paragraph{Curves on a surface.}
If $\alpha$ and $\beta$ are simple closed curves on $\Sigma$, then denote by $i(\alpha,\beta)$ the {\em geometric
intersection number} of $\alpha$ and $\beta$, that is, the minimum size of $\alpha' \cap \beta'$ as $\alpha'$ and $\beta'$
range over curves homotopic to $\alpha$ and $\beta$, respectively.  One of the key insights of \cite{FarbLeiningerMargalit} 
is that one can obtain estimates on $\lambda(f)$ by bounding the size of $i(f(\delta),\delta)$ from below for all nonnullhomotopic
simple closed curves $\delta$.  Indeed, \cite[Proposition 2.7]{FarbLeiningerMargalit} says that if $f \in \Mod(\Sigma)$
is pseudo-Anosov and if $i(f(\delta),\delta) \geq n \geq 3$ for all nonnullhomotopic simple closed curves $\delta$, then
$\lambda(f) > n/2$.  

Recall that there is a bijection between free homotopy classes of oriented closed curves on $\Sigma$ and conjugacy classes
in $\pi_1$.  If $f \in \Johnson_k(\Sigma)$ and $\delta$ is an oriented simple closed curve, then by definition
the conjugacy classes in $\pi_1$ associated to $f(\delta)$ and $\delta$ project to the same conjugacy class
in $\pi_1 / \gamma_{k+1}(\pi_1)$.  The following theorem (applied with $d = k+1$) therefore provides a lower bound on
$i(f(\delta),\delta)$.

\begin{maintheorem}
\label{maintheorem:isect}
Let $\Sigma$ be a closed surface whose genus is at least $2$ and let
$\alpha$ and $\beta$ be nonisotopic oriented simple closed curves on $\Sigma$.  Assume
that for some $d \geq 3$ the conjugacy classes in $\pi_1(\Sigma) / \gamma_d(\pi_1(\Sigma))$
associated to $\alpha$ and $\beta$ are the same.  Then
$$i(\alpha,\beta) \geq \left(\frac{d+2}{2}\right)^c,$$
where $c = \frac{\ln(28/25)}{\ln(4)}$.
\end{maintheorem}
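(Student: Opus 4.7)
My plan is to induct on $d$ with the recursion tuned to produce the exponent $c=\ln(28/25)/\ln(4)$. First put $\alpha$ and $\beta$ in minimal position, so $|\alpha\cap\beta|=n:=i(\alpha,\beta)$. Since the free homotopy class of an oriented simple closed curve corresponds to a conjugacy class in $\pi_1=\pi_1(\Sigma)$, the hypothesis gives, after picking a basepoint and based representatives $\tilde\alpha,\tilde\beta$ of $\alpha$ and $\beta$, some $g\in\pi_1$ with $g^{-1}\tilde\alpha g\cdot\tilde\beta^{-1}\in\gamma_d(\pi_1)$.

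\emph{Base case $d=3$.} I must check $n\geq 2$, since $((3+2)/2)^c<2$. If $n=0$, then the non-isotopic disjoint pair $\alpha\cup\beta$ cobounds a non-annular subsurface; choosing an arc across this subsurface to base the loops, $\tilde\alpha\tilde\beta^{-1}$ becomes a commutator of a symplectic pair in a handle, which lies in $\gamma_2(\pi_1)\setminus\gamma_3(\pi_1)$ and contradicts the hypothesis. If $n=1$, the algebraic intersection is $\pm 1$, so $[\alpha]\neq [\beta]$ in $\HH_1(\Sigma;\Z)$ and the curves are not even conjugate modulo $\gamma_2$.

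\emph{Inductive step.} Let $n(d)$ denote the minimum of $i(\alpha,\beta)$ over all non-isotopic pairs of oriented simple closed curves conjugate modulo $\gamma_d$. I would try to establish a recursion of the shape
\[
 n(d)\ \geq\ \tfrac{28}{25}\cdot n\!\left(\left\lceil\tfrac{d}{4}\right\rceil\right).
\]
Iterating this recursion $\log_4(d)$ times and feeding in $n(3)\geq 2$ yields $n(d)\gtrsim 2\cdot d^{c}$, which after matching constants and the additive shifts in the statement gives the stated bound $((d+2)/2)^c$. To produce the recursion I would identify a distinguished collection of roughly $3/28$ of the intersection points $p\in\alpha\cap\beta$ at which to perform a coordinated smoothing, yielding a new non-isotopic pair $(\alpha',\beta')$ of simple closed curves with $i(\alpha',\beta')\leq\lfloor(25/28)n\rfloor$. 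The design of the surgery must guarantee that the new pair remains conjugate modulo $\gamma_{\lceil d/4\rceil}$; the factor of $4$ enters because four nested smoothings can be absorbed into a weight-$4$ commutator correction in the nilpotent quotient, so a single smoothing can only drop the Johnson depth by a factor of $4$.

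\emph{Main obstacle.} The hardest step is producing the surgery with the correct trade-off. Smoothing at a transverse intersection can change the topological type of the curves drastically (splitting them, connecting them, changing their isotopy classes), so one must choose the intersection points to smooth using both surface-topological criteria (to preserve simplicity and non-isotopy) and commutator-calculus criteria (to control the nilpotent residue of $g^{-1}\tilde\alpha g\cdot\tilde\beta^{-1}$). Quantifying the depth loss under each smoothing by a Hall--Witt/Jacobi-identity calculation in $\pi_1(\Sigma)/\gamma_d(\pi_1(\Sigma))$, and simultaneously keeping the intersection loss bounded by $3/28$ per quadruple-smoothing, is where the specific constants $28/25$ and $4$ must be forced to appear as the optimum of a balancing argument.
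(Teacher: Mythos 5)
Your proposal correctly guesses that the constants $28/25$ and $4$ are the ones to aim for and correctly identifies the main obstacle, but the core inductive step---the recursion $n(d)\geq\tfrac{28}{25}\,n(\lceil d/4\rceil)$ via smoothing intersections---is not an argument, it is a description of what you wish were true. There is no reason a ``coordinated smoothing'' of $3/28$ of the intersection points should produce two new simple closed curves that are (a) still simple, (b) still nonisotopic, and (c) still conjugate in $\pi_1/\gamma_{\lceil d/4\rceil}$. You explicitly list these as the hardest parts, but you give no mechanism for any of them, and in particular the asserted ``weight-$4$ commutator correction'' justifying the $d\mapsto d/4$ drop is pure reverse-engineering of the exponent. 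This is not a small gap: it is the entire theorem. A deeper problem is that you fix based representatives $\tilde\alpha,\tilde\beta$ and some $g\in\pi_1$ with $g^{-1}\tilde\alpha g\,\tilde\beta^{-1}\in\gamma_d$, and then reason as if $g$ and the basing arcs interact sensibly with the geometric intersection pattern. They need not---$g$ can be an arbitrary, geometrically wild element---and any surgery-based argument must track how this uncontrolled conjugator behaves under the surgery. The paper flags exactly this difficulty (the algebraic and topological conditions ``do not interact very well'') and the authors explicitly say they conjecture that no good based representatives exist in general; your plan assumes otherwise.

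The paper's actual route is quite different and is designed precisely to evade this issue. It never performs surgery on the curves. Instead it builds a tower of regular degree-$2$ covers $(\Sigma_k,v_k)\to\cdots\to(\Sigma,v)$ of length $k\leq 2\log_{28/25}(n)+1$ such that the curve--arc triple has ``only a partially closed lift'' upstairs (one of $\alpha,\beta$ lifts to a closed curve and the other does not). The factor $28/25$ arises because two successive random covers can be chosen (by an averaging/probabilistic argument over $\Field_2$-linear functionals) to kill at least a $3/28$ fraction of the intersections while keeping both lifts closed, so at most $2\log_{28/25}(n)$ steps are needed. The factor $4$ arises algebraically afterwards: a length-$k$ chain of index-$2$ subgroups has a common normal subgroup of index $2^{\ell}$ with $\ell\leq 2^k-1$, and a finite $2$-group of order $2^{\ell}$ is at most $(\ell-1)$-step nilpotent; since $\tau\beta^{-1}\tau^{-1}\alpha\in\gamma_d$ yet escapes $\Gamma_k$, one gets $d\leq 2^k-2\leq 2^{2\log_{28/25}(n)+1}-2$, which rearranges to $n\geq((d+2)/2)^c$. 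So the $4=2^2$ and $28/25$ in $c$ have nothing to do with Hall--Witt identities or quadruple smoothings; they come from $2$-group nilpotency and an expectation computation, respectively. Your base case for $n=0$ is also weaker than what is needed: ``a commutator of a symplectic pair in a handle'' is not what you get in general (the bounding subsurface can have any genus, and the based conjugating element is uncontrolled), and the paper instead invokes Kawazumi--Kuno on logarithms of Dehn twists together with Johnson's and Bestvina--Bux--Margalit's computations to rule out $T_\alpha T_\beta^{-1}$ lying deep in the Johnson filtration.
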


\begin{remark}
In Appendix \ref{appendix:nonclosed}, we show that the conclusion of Theorem \ref{maintheorem:isect} also holds
for compact surfaces with boundary.
\end{remark}

\paragraph{Deriving Theorem \ref{maintheorem:dil} from Theorem \ref{maintheorem:isect}.}
We now discuss how to derive Theorem \ref{maintheorem:dil} from Theorem \ref{maintheorem:isect}.
The bound in Theorem \ref{maintheorem:isect} is greater than $2$ starting at $d=9623$.  
Therefore, Theorem \ref{maintheorem:isect} together with the aforementioned result \cite[Proposition 2.7]{FarbLeiningerMargalit} implies
that for $k \geq 9622$ we have 
\begin{equation}
\label{eqn:bound}
L\left(\Johnson_k\left(\Sigma\right)\right) \geq \ln\left(\frac{1}{2} \left(\frac{k+3}{2}\right)^c\right) = c \cdot \ln\left(\left(\frac{k+3}{2}\right)\right) - \ln\left(2\right),
\end{equation}
where $c = \frac{\ln(28/25)}{\ln(4)}$.  Farb--Leininger--Margalit also proved that $L(\Johnson_1(\Sigma)) > 0.197$ (see
\cite[Theorem 1.1]{FarbLeiningerMargalit}), which clearly implies that $L(\Johnson_k(\Sigma)) > 0.197$ for all $k \geq 1$.  The bound
in \eqref{eqn:bound} is less than $0.197$ for $k < 9622$, so Theorem \ref{maintheorem:dil} follows.

\paragraph{Proof techniques for Theorem \ref{maintheorem:isect}.}
The heart of our proof of Theorem \ref{maintheorem:isect} is the construction of a suitable nilpotent
cover of $\Sigma$ which ``resolves'' the intersections of $\alpha$ and $\beta$.  This
nilpotent cover is obtained by constructing a sequence of $2$-fold covers each of which resolves at minimum some constant fraction of
of the intersections.  It is easy to construct covers that resolve a single intersection at a time, but that only leads
to a logarithmic bound in Theorem \ref{maintheorem:isect}.  To get our stronger bound, we need to resolve many intersections
at once.  It seems difficult to explicitly construct these $2$-fold covers, but we show that they exist
using probabilistic arguments.  Namely, we prove that if the covers are chosen at random in an appropriate way, then
the expected value of the number of resolved intersections is larger than some constant fraction of the intersections, and thus
that there must exist {\em some} cover that resolves enough intersections.  See the proofs of the
key Lemmas \ref{lemma:basicmoveresolveisect}--\ref{lemma:basicmovemakegood} below.

\paragraph{Subtleties.}
We close this introduction by discussing some difficulties that arise in proving Theorem \ref{maintheorem:isect}.  Though
in the end we will not phrase it that way, 
one can view our proof as deriving a contradiction from the existence of nonisotopic simple closed curves $\alpha$ and $\beta$ on $\Sigma$ that induce
the same conjugacy classes in $\pi_1(\Sigma) / \gamma_d(\pi_1(\Sigma))$ and that satisfy $i(\alpha,\beta) < \left(\frac{d+2}{2}\right)^c$.
One difficulty that must be overcome is that given two such curves $\alpha$ and $\beta$ and a basepoint
$v \in \Sigma$, it seems hard to find 
based representatives $\widehat{\alpha},\widehat{\beta} \in \pi_1(\Sigma,v)$ of $\alpha$
and $\beta$ with the following two properties.
\begin{itemize}
\item $\widehat{\alpha}$ and $\widehat{\beta}$ intersect $i(\alpha,\beta)$ times (perhaps up to a constant factor), and
\item $\widehat{\alpha} \widehat{\beta}^{-1} \in \gamma_d(\pi_1(\Sigma,v))$.
\end{itemize}
In other words, the algebraic and topological conditions on $\alpha$ and $\beta$ do not interact very well.

If such $\widehat{\alpha}$ and $\widehat{\beta}$ existed, then the self-intersection number 
of $\widehat{\alpha} \widehat{\beta}^{-1}$ would be at most $i(\alpha,\beta)$ (up to a constant factor) and one could appeal to the paper \cite{MalesteinPutman} of the authors,
which bounds from below the self-intersection number of nontrivial elements of $\gamma_d(\pi_1(\Sigma,v))$.  Our proof does share
some ideas with \cite{MalesteinPutman}, but substantial new ideas were needed
to overcome the difficulties just discussed.  

\begin{remark}
We do not know any examples of curves $\alpha$ and $\beta$ as above that
cannot be realized by based curves with the above properties, but we conjecture that they exist.
\end{remark}

\paragraph{Acknowledgments.}
We want to thank Tom Church for helpful comments.


\section{Initial reduction}
\label{section:reduction}

We will prove Theorem \ref{maintheorem:isect} by constructing a certain finite cover of $\Sigma$
that ``resolves'' the intersections of $\alpha$ and $\beta$ in an appropriate way.  In this section,
we will state the lemma that gives this cover and then show how to derive Theorem \ref{maintheorem:isect}.

\paragraph{Curve-arc triples.}
We begin with some necessary definitions.  Fix a closed surface $\Sigma$ whose genus is at least $2$,
and choose a hyperbolic metric on $\Sigma$.  A {\em curve-arc triple} on $\Sigma$
is a tuple $((\alpha,v),\tau,(\beta,w))$ as follows.
\begin{itemize}
\item The element $\alpha$ is a simple closed oriented geodesic on $\Sigma$ which is based at $v \in \Sigma$.
\item The element $\beta$ is a simple closed oriented geodesic on $\Sigma$ which is based at $w \in \Sigma$.
\item $\alpha$ and $\beta$ are nonisotopic.
\item The element $\tau$ is a path (possibly with self-intersections) from $v$ to $w$.
\end{itemize}
If $((\alpha,v),\tau,(\beta,w))$ is a curve-arc triple and $(\widetilde{\Sigma},\widetilde{v}) \rightarrow (\Sigma,v)$ is
a (based) cover, then we can {\em lift} the triple $((\alpha,v),\tau,(\beta,w))$ to $(\widetilde{\Sigma},\widetilde{v})$
as follows.  First, endow $\widetilde{\Sigma}$ with the hyperbolic metric that makes the covering map a local isometry.  The
arc $\tau$ lifts to an arc $\widetilde{\tau}$ starting at $\widetilde{v}$; let $\widetilde{w}$ be the endpoint of $\widetilde{\tau}$.
Then $\alpha$ lifts to a geodesic arc $\widetilde{\alpha}$ starting at $\widetilde{v}$ and $\beta$ lifts to a geodesic arc
$\widetilde{\beta}$ starting at $\widetilde{w}$.  We will say that $((\widetilde{\alpha},\widetilde{v}),\widetilde{\tau},(\widetilde{\beta},\widetilde{w}))$ is
a {\em closed lift} of $((\alpha,v),\tau,(\beta,w))$ if $\widetilde{\alpha}$ and $\widetilde{\beta}$ are simple closed curves (and
thus $((\widetilde{\alpha},\widetilde{v}),\widetilde{\tau},(\widetilde{\beta},\widetilde{w}))$ is a curve-arc triple on $\widetilde{\Sigma}$).  We
will say that $((\alpha,v),\tau,(\beta,w))$ has {\em only a partially closed lift} if one of $\widetilde{\alpha}$ and
$\widetilde{\beta}$ is closed and the other is not closed.  Finally, we will say
that $((\alpha,v),\tau,(\beta,w))$ has a {\em nonclosed lift} if neither $\widetilde{\alpha}$ nor
$\widetilde{\beta}$ is closed.

\begin{remark}
The reason we require our curves to be geodesics with respect to some hyperbolic metric is that this implies that
they intersect minimally (see \cite[Corollary 1.9]{FarbMargalitPrimer}).  Moreover, this persists when we pass
to finite covers.
\end{remark}

\paragraph{The key lemma.}
With these definitions in hand, we can state the key lemma of this paper.

\begin{lemma}
\label{lemma:resolveallisect}
Let $\Sigma$ be a closed surface whose genus is at least $2$.  Fix a hyperbolic metric
on $\Sigma$ and let $((\alpha,v),\tau,(\beta,w))$ be a curve-arc triple on
$\Sigma$.  Set $n = i(\alpha,\beta)$.  Then for some $k$ satisfying
$$k \leq \begin{cases}
2 \log_{28/25}(n)+1 & \text{if $n \geq 2$}\\
3 & \text{if $0 \leq n \leq 1$}\end{cases}$$ 
there exists a tower
$$(\Sigma_k,v_k) \longrightarrow (\Sigma_{k-1},v_{k-1}) \longrightarrow \cdots \longrightarrow (\Sigma_0,v_0) = (\Sigma,v)$$
of regular degree $2$ based covers such that $((\alpha,v),\tau,(\beta,w))$ has
only a partially closed lift to $(\Sigma_k,v_k)$.
\end{lemma}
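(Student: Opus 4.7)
The plan is to prove the lemma by induction on $n = i(\alpha,\beta)$. The base case $n \leq 1$ is handled by a short direct construction: when $\alpha,\beta$ are nonisotopic simple closed geodesics meeting at most once, one builds by hand a tower of at most $3$ double covers producing a partially closed lift. The inductive step, which carries the real content, constructs a single degree-$2$ cover that either directly achieves a partially closed lift of the triple (terminating the induction), or yields a new curve-arc triple to which the inductive hypothesis applies with strictly smaller intersection number. One expects such a step to give only a multiplicative reduction of roughly $\sqrt{25/28}$ in the intersection number, which after iteration accounts for the bound $k \leq 2\log_{28/25}(n) + 1$.

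The main tool for the inductive step is a probabilistic argument over surjective homomorphisms $\rho\colon \HH_1(\Sigma;\mathbb{F}_2)\to\mathbb{F}_2$. The induced degree-$2$ cover $\widetilde{\Sigma}\to\Sigma$ lifts $\alpha$ to a closed curve starting at $\widetilde{v}$ iff $\rho([\alpha])=0$, and analogously for $\beta$ (with its basepoint translated to $v$ via $\tau$). For each intersection point $p\in\alpha\cap\beta$ one associates a loop $\ell_p\in\pi_1(\Sigma,v)$ by concatenating the sub-arc of $\alpha$ from $v$ to $p$, the reverse of the sub-arc of $\beta$ from $w$ to $p$, and $\tau^{-1}$. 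When both $\widetilde{\alpha}$ and $\widetilde{\beta}$ are closed, the intersection $p$ survives as an intersection of $\widetilde{\alpha}$ with $\widetilde{\beta}$ in $\widetilde{\Sigma}$ precisely when $\rho([\ell_p])=0$ (here one uses that lifts of hyperbolic geodesics to a locally isometric cover continue to realize the geometric intersection number). If $[\alpha]\ne[\beta]$ in $\HH_1(\Sigma;\mathbb{F}_2)$, then half of all nontrivial $\rho$ satisfy $\rho([\alpha])\ne\rho([\beta])$, so a single cover already yields a partially closed lift. In the harder case $[\alpha]=[\beta]$, no single cover can produce a partial closing, and one must instead choose $\rho$ with $\rho([\alpha])=0$ so that both lifts close but a definite fraction of the intersections is resolved.

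The hard case $[\alpha]=[\beta]$ is where I expect the main obstacle. One must rule out the possibility that all $[\ell_p]$ lie in $\mathrm{span}([\alpha])\subset \HH_1(\Sigma;\mathbb{F}_2)$, quantify how many of them remain independent modulo $[\alpha]$, and also track how the basepoint-arc $\tau$ must be updated in the lifted triple (with possibly a different connecting arc once one passes to the cover). A natural first-pass estimate gives resolution fractions of the form $(1-\epsilon)/2$, with $\epsilon$ reflecting the conditioning $\rho([\alpha])=0$; this yields only a per-step reduction of $\sqrt{25/28}$, which in turn accounts for the factor of $2$ in the exponent of the bound. Iterating this single-step reduction down to $n\leq 1$ and then invoking the base case assembles the tower of the claimed length.
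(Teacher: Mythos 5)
Your proposal captures the general architecture the paper uses---induction on $n=i(\alpha,\beta)$, a case split on whether $[\alpha]=[\beta]$ in $\HH_1(\Sigma;\Field_2)$, the observation that a surviving intersection of geodesic lifts corresponds to the vanishing of a mod-$2$ class, and the use of a probabilistic choice of cover---but it has a genuine gap at exactly the spot you yourself flag as "the main obstacle." In the case $[\alpha]=[\beta]$, there is no mechanism in your argument to prevent \emph{all} of the classes $[\ell_p]$ from being trivial modulo $\Span{[\alpha]}$. If that happens, then for every $\rho$ with $\rho([\alpha])=0$ (which you must impose to keep both lifts closed), every intersection survives, and a single cover resolves nothing. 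Your "conditioning $\epsilon$" does not patch this: $\epsilon$ can be $1$, and then the per-step resolution fraction is $0$, not $(1-\epsilon)/2>0$. So the proposed inductive step simply does not go through as stated.

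The missing idea is the paper's good/bad dichotomy and the resulting two-cover-per-iteration structure (Lemma \ref{lemma:basicmove}). The paper works not with loops $\ell_p$ based at $v$ but with a set $\mathcal{A}$ of disjoint $\beta$-arcs of $\alpha$ (pairing up the $n$ intersection points into $n/2$ arcs), and classifies an arc as \emph{good} or \emph{bad} according to whether its class in $\HH_1(\Sigma,\beta;\Field_2)$ is nonzero. Lemma \ref{lemma:basicmoveresolveisect} shows that one random cover resolves half the good arcs (this is the analogue of your step). The crucial new ingredient, absent from your proposal, is Lemma \ref{lemma:basicmovemakegood}: a bad arc closed up along $\beta$ is a separating simple closed curve disjoint from $\beta$ (Lemma \ref{lemma:makegoodsep}), and a suitable cover lifts it to a nonseparating curve (Lemma \ref{lemma:badarcresolve}), turning bad arcs into good ones. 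By the counting in Lemma \ref{lemma:random2}, a random cover of this kind makes at least $3/7$ of the bad arcs good. Chaining these gives a reduction of $25/28$ after \emph{two} covers; the factor of $2$ in $k\leq 2\log_{28/25}(n)+1$ reflects this two-cover step, not a per-cover reduction of $\sqrt{25/28}$. Finally, your proposal also omits the separate preliminary reduction needed when $[\alpha]=0$ or $[\beta]=0$ (the paper's Step 2, using Lemma \ref{lemma:sep}), which accounts for the extra $+1$ in the bound.
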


\begin{remark}
It is important that the lift is partially closed -- the proof below will not work
if it is either closed or nonclosed.
\end{remark}

\noindent
The proof of Lemma \ref{lemma:resolveallisect} is contained in \S \ref{section:constructcover}.

\paragraph{The main theorem.}
Before deducing Theorem \ref{maintheorem:isect} from Lemma \ref{lemma:resolveallisect},
we need the following two known lemmas. Recall that a group $G$ is {\it at most $k$-step nilpotent}
if $\gamma_{k+1}(G) = 0$.

\begin{lemma}[{\cite[\S 6.1 Proposition 2]{DummitFoote}}]
\label{lemma:detectnilpotent}
Let $G$ be a finite group of order $2^k$ for some $k \geq 2$.  Then
$G$ is at most $(k-1)$-step nilpotent, and hence $\gamma_k(G) = 0$.
\end{lemma}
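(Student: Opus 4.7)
The plan is to use two standard facts about finite $p$-groups, applied iteratively to build the upper central series of $G$ and then bound its length. Recall that $G$ is at most $(k-1)$-step nilpotent (equivalently, $\gamma_k(G)=1$) iff the upper central series $1=Z_0\subseteq Z_1\subseteq\cdots$, defined by $Z_{i+1}/Z_i=Z(G/Z_i)$, reaches $G$ in at most $k-1$ steps.

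First I would establish the standard fact that any nontrivial finite $2$-group $H$ has nontrivial center. This follows from the class equation: letting $H$ act on itself by conjugation, its fixed set is $Z(H)$ and each non-central conjugacy class has size a power of $2$ greater than $1$, so $|H|\equiv|Z(H)|\pmod{2}$, forcing $2\mid|Z(H)|$. Applying this fact iteratively to the $2$-groups $G/Z_i$, the series $Z_0\subsetneq Z_1\subsetneq Z_2\subsetneq\cdots$ strictly increases until it terminates at $G$, at some stage $c$ called the nilpotency class. Since $|Z_i/Z_{i-1}|\geq 2$ for $1\leq i\leq c$, we immediately obtain the crude bound $2^c\leq|G|=2^k$, i.e.\ $c\leq k$.

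The delicate point is improving this to $c\leq k-1$. For this I would invoke the classical lemma that if $H/Z(H)$ is cyclic, then $H$ is abelian (proved by writing any two elements as $z_1g^a,z_2g^b$ for a generator $gZ(H)$ of the cyclic quotient, and checking they commute). Assuming $c\geq 2$, apply this lemma with $H=G/Z_{c-2}$, whose center is $Z_{c-1}/Z_{c-2}$ and whose quotient by its center is $G/Z_{c-1}$: if $G/Z_{c-1}$ were cyclic, then $G/Z_{c-2}$ would be abelian, forcing $Z_{c-1}=G$ and contradicting the assumption that the class is $c\geq 2$. Therefore $G/Z_{c-1}$ is not cyclic, so $|G/Z_{c-1}|\geq 4$, which combined with $|Z_{c-1}|\geq 2^{c-1}$ yields $|G|\geq 2^{c+1}$ and hence $c\leq k-1$. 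The remaining cases $c=0$ (which cannot occur since $k\geq 2$ forces $|G|\geq 4$) and $c=1$ (which satisfies $1\leq k-1$ since $k\geq 2$) are trivial.

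The main obstacle is precisely this sharpening step: the naive argument yields only $\gamma_{k+1}(G)=1$, whereas the statement demands $\gamma_k(G)=1$. Without the cyclic-quotient lemma one loses a factor of $2$ in the bound on the degree of the nilpotent cover, which would weaken the constant $c=\ln(28/25)/\ln(4)$ appearing in the main theorems. Apart from this single subtlety, the proof is a routine application of the $p$-group machinery and the equivalence between the upper and lower central series formulations of nilpotency.
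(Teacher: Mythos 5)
Your proof is correct and is the standard argument: establish nontriviality of centers of $2$-groups via the class equation, build the upper central series, and sharpen the crude bound $c \leq k$ to $c \leq k-1$ using the fact that a group with cyclic central quotient is abelian. The paper itself supplies no proof — it simply cites Dummit--Foote, where essentially this argument appears — so you have filled in what the paper delegates to a reference.

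One small quibble with your closing commentary: you suggest that dropping the sharpening (using only $\gamma_{k+1}(G)=1$) would weaken the constant $c=\ln(28/25)/\ln(4)$ in the main theorem. Tracing through the proof of Theorem~\ref{maintheorem:isect}, the weaker bound would change $d \leq 2^k-2$ to $d \leq 2^k-1$, yielding $n \geq ((d+1)/2)^c$ instead of $n \geq ((d+2)/2)^c$; the exponent $c$ is untouched, only the additive shift inside the parentheses degrades. The sharpening is still worth having, but the loss without it is an additive constant, not a change in $c$.
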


\begin{lemma}[{\cite[Lemma 2.3]{MalesteinPutman}}]
\label{lemma:passtonormal}
Let
$$\Gamma_k \lhd \Gamma_{k-1} \lhd \cdots \lhd \Gamma_0$$
be groups satisfying $[\Gamma_i:\Gamma_{i+1}] = 2$ for $0 \leq i < k$.  Then there
exists a subgroup $\Gamma' \lhd \Gamma_k$ such that $\Gamma' \lhd \Gamma_0$
and $[\Gamma_0:\Gamma'] = 2^{\ell}$ with
$\ell \leq 2^k-1$.
\end{lemma}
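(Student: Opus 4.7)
The plan is to take $\Gamma'$ to be the normal core of $\Gamma_k$ in $\Gamma_0$, that is,
$$\Gamma' = \bigcap_{g \in \Gamma_0} g \Gamma_k g^{-1}.$$
By construction $\Gamma' \leq \Gamma_k$ and $\Gamma' \lhd \Gamma_0$ (so $\Gamma' \lhd \Gamma_k$ is automatic), and the only content of the lemma is the index bound $[\Gamma_0 : \Gamma'] \leq 2^{2^k - 1}$.

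The first step is to convert the index bound into a statement about permutation groups. The left multiplication action of $\Gamma_0$ on the set of cosets $X = \Gamma_0 / \Gamma_k$ has kernel exactly equal to $\Gamma'$. Since $|X| = 2^k$, this yields an injection $\Gamma_0/\Gamma' \hookrightarrow S_X = S_{2^k}$, so it suffices to show that the image lies inside some subgroup of $S_{2^k}$ of order at most $2^{2^k - 1}$. A natural candidate is the iterated wreath product $W_k = C_2 \wr C_2 \wr \cdots \wr C_2$ ($k$ factors), which has order $2^{1 + 2 + 4 + \cdots + 2^{k-1}} = 2^{2^k - 1}$ and is in fact a Sylow $2$-subgroup of $S_{2^k}$.

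The key structural observation, and the main step of the argument, is that the $\Gamma_0$-action on $X$ preserves more than just the set structure: the chain $\Gamma_k \leq \Gamma_{k-1} \leq \cdots \leq \Gamma_0$ endows $X$ with a tower of $\Gamma_0$-equivariant projections
$$\Gamma_0/\Gamma_k \twoheadrightarrow \Gamma_0/\Gamma_{k-1} \twoheadrightarrow \cdots \twoheadrightarrow \Gamma_0/\Gamma_0 = \{\ast\},$$
each map being well defined by $g\Gamma_i \mapsto g \Gamma_{i-1}$, and with every fiber of $\Gamma_0/\Gamma_i \twoheadrightarrow \Gamma_0/\Gamma_{i-1}$ having size $[\Gamma_{i-1}:\Gamma_i] = 2$. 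Equivariance with respect to left multiplication is immediate. This data exhibits $X$ as the leaf set of a rooted binary tree of depth $k$, and $\Gamma_0$ acts by automorphisms of that tree, so its image in $S_{2^k}$ lands in $\mathrm{Aut}(\text{tree}) \cong W_k$.

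Combining the two steps, $[\Gamma_0 : \Gamma'] = |\Gamma_0/\Gamma'| \leq |W_k| = 2^{2^k - 1}$, giving the desired $\ell \leq 2^k - 1$. There is no serious obstacle here: everything reduces to the observation that left multiplication on cosets respects the tower of quotient maps, together with the elementary order computation for the iterated wreath product.
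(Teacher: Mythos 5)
The paper does not prove this lemma; it simply cites it as \cite[Lemma~2.3]{MalesteinPutman}, so there is no in-text argument to compare against. Your proof is correct and complete. Taking $\Gamma'$ to be the normal core of $\Gamma_k$ in $\Gamma_0$ is the right choice, the observation that the tower of coset spaces
$$\Gamma_0/\Gamma_k \twoheadrightarrow \Gamma_0/\Gamma_{k-1} \twoheadrightarrow \cdots \twoheadrightarrow \Gamma_0/\Gamma_0$$
endows $X = \Gamma_0/\Gamma_k$ with a $\Gamma_0$-equivariant depth-$k$ binary tree structure is exactly what is needed, and the identification of $\Aut(\text{tree})$ with the $k$-fold iterated wreath product of $\Z/2$, i.e.\ a Sylow $2$-subgroup of $S_{2^k}$ of order $2^{2^k-1}$, finishes it. Note that this also shows $[\Gamma_0:\Gamma']$ \emph{is} a power of $2$ (not merely bounded by one), since $\Gamma_0/\Gamma'$ embeds in a $2$-group; it is worth stating this explicitly, as the lemma asserts equality $[\Gamma_0:\Gamma'] = 2^\ell$. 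An alternative, equally short route is an induction on $k$: apply the inductive hypothesis to $\Gamma_k \lhd \cdots \lhd \Gamma_1$ inside $\Gamma_1$ to get $\Gamma'' \lhd \Gamma_1$ with $[\Gamma_1:\Gamma''] = 2^{\ell''}$, $\ell'' \leq 2^{k-1}-1$, then set $\Gamma' = \Gamma'' \cap g_0 \Gamma'' g_0^{-1}$ for any $g_0 \in \Gamma_0 \setminus \Gamma_1$; this gives $[\Gamma_0:\Gamma'] \leq 2 \cdot 2^{2\ell''} = 2^{1+2\ell''}$ with $1+2\ell'' \leq 2^k-1$, recovering the same bound. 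The wreath-product picture you use is the conceptual reason the recursion $\ell_k = 2\ell_{k-1}+1$ appears, so the two arguments are really the same one.
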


\begin{proof}[{Proof of Theorem \ref{maintheorem:isect}}]
As in the statement of the theorem, let $\Sigma$ be a closed surface
whose genus is at least $2$ and let
$\alpha$ and $\beta$ be nonisotopic oriented simple closed curves on $\Sigma$ such that
for some $d \geq 3$ the conjugacy classes in $\pi_1(\Sigma) / \gamma_d(\pi_1(\Sigma))$
associated to $\alpha$ and $\beta$ are the same.  Set $n = i(\alpha,\beta)$.  Our goal is to show
that 
$$n \geq \left(\frac{d+2}{2}\right)^c$$
with $c = \frac{\ln(28/25)}{\ln(4)}$.

We first claim that $n \geq 2$.  If $n=1$, then $\alpha$ and $\beta$ have algebraic intersection number $\pm 1$, which is impossible
since they are homologous.  Assume now that $n=0$, i.e.\ that $\alpha$ and $\beta$ are disjoint.  We divide this into two cases.
\begin{itemize}
\item If $\alpha$ and $\beta$ are nonseparating, then \cite[Theorem 1.1.2]{KawazumiKuno} implies that $T_{\alpha} T_{\beta}^{-1} \in \Johnson_{d-1}(\Sigma)$.
However, $T_{\alpha} T_{\beta}^{-1} \notin \Johnson_2(\Sigma) \supset \Johnson_{d-1}(\Sigma)$; see \cite{JohnsonAbel}.
\item If $\alpha$ and $\beta$ are separating, then \cite[Theorem 1.1.2]{KawazumiKuno} implies that $T_{\alpha} T_{\beta}^{-1} \in \Johnson_d(\Sigma)$.  
However, $T_{\alpha} T_{\beta}^{-1} \notin \Johnson_3 \supset \Johnson_d(\Sigma)$; see \cite[Appendix A]{BestvinaBuxMargalit}.
\end{itemize}

\begin{remark}
As stated, \cite[Theorem 1.1.2]{KawazumiKuno} concerns surfaces with one boundary component.  However, the desired
result for closed surfaces easily follows from this via the map on mapping class groups obtained by gluing a disc to the
boundary component.
\end{remark}

Fix a hyperbolic metric on $\Sigma$ and isotope $\alpha$ and $\beta$ such that they are geodesics.
Fix basepoints $v$ and $w$
for $\alpha$ and $\beta$, respectively, and regard them as based curves.  Choose an arc
$\tau'$ from $v$ to $w$.  By assumption, $\alpha \in \pi_1(\Sigma,v)$ and 
$\tau' \cdot \beta \cdot (\tau')^{-1} \in \pi_1(\Sigma,v)$ map to conjugate elements
in $\pi_1(\Sigma,v) / \gamma_d(\pi_1(\Sigma,v))$.  There thus exists some
$\tau'' \in \pi_1(\Sigma,v)$ such that letting $\tau = \tau'' \cdot \tau'$, we have
$\tau \cdot \beta^{-1} \cdot \tau^{-1} \cdot \alpha \in \gamma_d(\pi_1(\Sigma,v))$.  The
triple $((\alpha,v),\tau,(\beta,w))$ is a curve-arc triple.

By Lemma \ref{lemma:resolveallisect}, for some $k \leq 2 \log_{28/25}(n)+1$, there exists a tower
$$(\Sigma_k,v_k) \longrightarrow (\Sigma_{k-1},v_{k-1}) \longrightarrow \cdots \longrightarrow (\Sigma_0,v_0) = (\Sigma,v)$$
of regular degree $2$ based covers such that $((\alpha,v),\tau,(\beta,w))$ has only a partially closed 
lift to $(\Sigma_k,v_k)$.  For $0 \leq i \leq k$, let $\Gamma_i \leq \pi_1(\Sigma,v)$ be the subgroup associated
to $(\Sigma_i,v_i)$, so we have a sequence
$$\Gamma_k \lhd \Gamma_{k-1} \lhd \cdots \lhd \Gamma_0$$
of groups.  Lemma \ref{lemma:passtonormal} says that there exists a subgroup $\Gamma' \lhd \Gamma_k$
such that $\Gamma' \lhd \Gamma_0$ and $[\Gamma_0:\Gamma'] = 2^{\ell}$ with $\ell \leq 2^k-1$.
Since $((\alpha,v),\tau,(\beta,w))$ has only a partially closed lift to $(\Sigma_k,v_k)$,
one element in $\{\alpha,\tau \cdot \beta \cdot \tau^{-1}\}$ lies in $\Gamma_k$ and the
other does not lie in $\Gamma_k$.  We deduce that 
$\tau \cdot \beta^{-1} \cdot \tau^{-1} \cdot \alpha \notin \Gamma_k$, and hence
$\tau \cdot \beta^{-1} \cdot \tau^{-1} \cdot \alpha \notin \Gamma'$.

Lemma \ref{lemma:detectnilpotent} implies that $\Gamma_0 / \Gamma'$ is
at most $(\ell-1)$-step nilpotent, so $\gamma_{\ell}(\Gamma_0) \subset \Gamma'$.  Since
$\tau \cdot \beta^{-1} \cdot \tau^{-1} \cdot \alpha \notin \Gamma'$ and
$\tau \cdot \beta^{-1} \cdot \tau^{-1} \cdot \alpha \in \gamma_d(\Gamma_0)$, we
conclude that $d < \ell$, i.e.\ that 
$$d \leq 2^k - 2 \leq 2^{2 \log_{28/25}(n)+1} - 2.$$
A little elementary algebra then shows that
$$n \geq \left(\frac{d+2}{2}\right)^c$$
with $c = \frac{\ln(28/25)}{\ln(4)}$, as desired.
\end{proof}


\section{Building a tower}
\label{section:constructcover}

This section is devoted to the proof of Lemma \ref{lemma:resolveallisect}. 
We begin by presenting several basic constructions used in the proof in \S \ref{section:moves}.
We then prove the lemma in \S \ref{section:constructcoverproof}.

\subsection{Basic constructions}
\label{section:moves}

To simplify our notation, from this point onward we will write $\Field_2$ instead
of $\Z/2$.  For an oriented closed curve $\delta$ in $\Sigma$, 
the associated element of $\HH_1(\Sigma;\Field_2)$ is denoted by $[\delta]$.

\begin{lemma}
\label{lemma:different}
Let $\Sigma$ be a closed surface whose genus is at least $2$.  Fix a hyperbolic metric
on $\Sigma$ and let $((\alpha,v),\tau,(\beta,w))$ be a curve-arc triple on
$\Sigma$.  Assume that $[\alpha] \neq [\beta]$.  Then there exists a regular degree
$2$ cover $(\widetilde{\Sigma},\widetilde{v}) \rightarrow (\Sigma,v)$ such that
$((\alpha,v),\tau,(\beta,w))$ has only a partially closed lift to $(\widetilde{\Sigma},\widetilde{v})$.
\end{lemma}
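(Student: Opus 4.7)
The plan is to construct the required degree 2 cover directly from homological data using the assumption $[\alpha] \neq [\beta]$ in $\HH_1(\Sigma;\Field_2)$. Recall that regular degree $2$ based covers of $(\Sigma,v)$ are in bijection with surjective homomorphisms $\phi \colon \pi_1(\Sigma,v) \to \Field_2$, which in turn (since $\Field_2$ is abelian) factor through the Hurewicz map as homomorphisms $\overline{\phi} \colon \HH_1(\Sigma;\Field_2) \to \Field_2$. So the task reduces to producing an appropriate $\overline{\phi}$.

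The first step is to translate the ``partially closed lift'' condition into an algebraic condition on $\phi$. Let $\Gamma = \ker(\phi)$ be the subgroup corresponding to a candidate cover $(\widetilde{\Sigma},\widetilde{v})$. The lift $\widetilde{\alpha}$ of $(\alpha,v)$ based at $\widetilde{v}$ is a closed curve iff $\alpha \in \Gamma$, i.e.\ iff $\phi(\alpha) = 0$. For $\beta$, we must first lift $\tau$ to an arc from $\widetilde{v}$ to $\widetilde{w}$; then $\widetilde{\beta}$ based at $\widetilde{w}$ is closed iff $\tau \cdot \beta \cdot \tau^{-1} \in \Gamma$, and since $\phi$ lands in an abelian group, this is equivalent to $\phi(\beta) = 0$. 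So the triple has only a partially closed lift iff $\phi(\alpha) \neq \phi(\beta)$ in $\Field_2$, which in homological terms means $\overline{\phi}([\alpha]) \neq \overline{\phi}([\beta])$, i.e.\ $\overline{\phi}([\alpha] + [\beta]) = 1$.

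The second step is to produce such a $\overline{\phi}$. Since $[\alpha] \neq [\beta]$ in the $\Field_2$-vector space $\HH_1(\Sigma;\Field_2)$, the element $[\alpha] + [\beta]$ is nonzero; extending $\{[\alpha] + [\beta]\}$ to a basis and projecting onto that coordinate yields a linear functional $\overline{\phi} \colon \HH_1(\Sigma;\Field_2) \to \Field_2$ sending $[\alpha] + [\beta]$ to $1$. The corresponding $\phi$ is automatically surjective (its image contains $1$), so its kernel $\Gamma$ defines a genuine regular degree $2$ based cover $(\widetilde{\Sigma},\widetilde{v}) \to (\Sigma,v)$.

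There is essentially no obstacle here; the content of the lemma is really just the bookkeeping in the first step, namely verifying that the closedness of the lift of $\beta$ (which is based at $w$, not $v$) is detected by $\phi(\beta)$ rather than something involving $\tau$ in an essential way. This works precisely because the target $\Field_2$ is abelian, so conjugation by $\tau$ disappears; it is for this reason that the argument is restricted to homology with $\Field_2$ coefficients and degree $2$ covers.
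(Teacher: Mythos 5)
Your proof is correct and takes essentially the same approach as the paper: pick a linear functional $\phi \colon \HH_1(\Sigma;\Field_2) \to \Field_2$ separating $[\alpha]$ and $[\beta]$, and take the associated regular double cover. You simply spell out the details the paper dispatches with ``clearly,'' in particular the observation that the conjugation by $\tau$ is invisible to $\phi$ because the target is abelian.
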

\begin{proof}
Pick a homomorphism $\phi : \HH_1(\Sigma;\Field_2) \rightarrow \Field_2$ 
such that $\phi([\alpha]) \neq \phi([\beta])$, and
let $(\widetilde{\Sigma},\widetilde{v}) \rightarrow (\Sigma,v)$ be the $2$-fold regular cover associated to
$\phi$.  Let the lift of $((\alpha,v),\tau,(\beta,w))$ to
$(\widetilde{\Sigma},\widetilde{v})$ be
$((\widetilde{\alpha},\widetilde{v}),\widetilde{\tau},(\widetilde{\beta},\widetilde{w}))$.  Clearly
$\widetilde{\alpha}$ is closed (resp.\ $\widetilde{\beta}$ is closed) if and only if
$\phi([\widetilde{\alpha}]) = 0$
(resp.\ $\phi([\widetilde{\beta}]) = 0$).
Since $\phi([\alpha]) \neq \phi([\beta])$,
we deduce that $((\alpha,v),\tau,(\beta,w))$ has only a partially closed lift
to $(\widetilde{\Sigma},\widetilde{v})$, as desired.
\end{proof}

\noindent
Next, we need the following.

\FigureB{figure:liftnosep}{LiftNosep}{In the left hand surface, the boundary components are glued in pairs to form a closed connected surface $\widetilde{\Sigma}$.
This has a regular degree $2$ covering map to the right hand surface $\Sigma$; the deck group exchanges the top and bottom piece while flipping them.  The
preimage of $\alpha$ is $\widetilde{\alpha}_1 \sqcup \widetilde{\alpha}_2$ and the preimage of $\beta$ is $\widetilde{\beta}_1 \sqcup \widetilde{\beta}_2$.}

\begin{lemma}
\label{lemma:bp}
Let $\Sigma$ be a closed surface whose genus is at least $2$.  Fix a hyperbolic metric
on $\Sigma$ and let $((\alpha,v),\tau,(\beta,w))$ be a curve-arc triple on
$\Sigma$.  Assume that $\alpha$ and $\beta$ are disjoint and that $[\alpha] = [\beta] \neq 0$.
Then there exists a regular degree $2$ cover
$(\widetilde{\Sigma},\widetilde{v}) \rightarrow (\Sigma,v)$ such that $((\alpha,v),\tau,(\beta,w))$
has a closed lift $((\widetilde{\alpha},\widetilde{v}),\widetilde{\tau},(\widetilde{\beta},\widetilde{w}))$ to
$(\widetilde{\Sigma},\widetilde{v})$ satisfying $[\widetilde{\alpha}] \neq [\widetilde{\beta}]$.
\end{lemma}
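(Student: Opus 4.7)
The plan is to produce an explicit homomorphism $\phi\colon H_1(\Sigma;\Field_2)\to\Field_2$ with $\phi([\alpha])=0$ (hence $\phi([\beta])=0$, since $[\alpha]=[\beta]$), so that both $\alpha$ and $\beta$ lift to closed curves in the associated double cover $\widetilde\Sigma$, and verify that the lifts have distinct mod-$2$ homology classes. The key geometric ingredient is a pair-of-pants neighborhood of $\alpha\cup\beta$.

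First I would construct $N$. Choose a simple arc $\eta$ from a point of $\alpha$ to a point of $\beta$, with interior disjoint from $\alpha\cup\beta$. A regular neighborhood $N$ of the graph $\alpha\cup\eta\cup\beta$, for a suitable choice of $\eta$, is a pair of pants with three boundary components: a pushoff $\alpha^{*}$ of $\alpha$, a pushoff $\beta^{*}$ of $\beta$, and a third simple closed curve $\gamma$. Since $\gamma$ bounds $N$ in $\Sigma$, it is separating, so $[\gamma]=0$ in $H_1(\Sigma;\Field_2)$. Write $M=\overline{\Sigma\setminus N}$. I would then take $\phi$ to be Poincar\'e dual to a class $[\mu]\in H_1(\Sigma;\Field_2)$ with $[\mu]\cdot[\alpha]=0$ and $[\mu]\neq 0$: for $g(\Sigma)\geq 3$ this is realized by a nonseparating simple closed curve $\mu\subset M$, while for $g(\Sigma)=2$ the surface $M$ is itself a pants, and one instead uses $\mu=\alpha$, giving $\phi=\mathrm{PD}([\alpha])$ with a slightly modified analysis in which the cover is trivial over both $N$ and $M$ and the twisting comes from the gluing of their boundaries.

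With this choice, $\phi|_{\pi_1(N)}\equiv 0$, so $\pi^{-1}(N)=N_1\sqcup N_2$ is two disjoint copies of the pants; and $\phi|_{\pi_1(M)}$ is nontrivial, so $\widetilde M:=\pi^{-1}(M)$ is a connected double cover of $M$. Each $N_i$ has boundary $\widetilde\alpha_i\cup\widetilde\beta_i\cup\widetilde\gamma_i$, and the pants relation yields
\[
[\widetilde\alpha_i]+[\widetilde\beta_i]=[\widetilde\gamma_i]\quad\text{in}\quad H_1(\widetilde\Sigma;\Field_2).
\]
The heart of the argument is to show $[\widetilde\gamma_1]\neq 0$, equivalently that $\widetilde\gamma_1$ does not separate $\widetilde\Sigma$. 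After removing $\widetilde\gamma_1$, the pants $N_1$ is still attached to $\widetilde M$ via its other two boundary circles $\widetilde\alpha_1$ and $\widetilde\beta_1$, while $\widetilde M$ is connected (and $N_2$ is attached to it via its full boundary), so $\widetilde\Sigma\setminus\widetilde\gamma_1$ is connected. Hence $[\widetilde\alpha_1]\neq[\widetilde\beta_1]$.

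To conclude, one chooses the preimage of $v$ on $\widetilde\alpha_1$ to arrange $\widetilde\alpha=\widetilde\alpha_1$, and must then ensure that the lift $\widetilde\tau$ terminates in $N_1$ (so that $\widetilde\beta=\widetilde\beta_1$ rather than $\widetilde\beta_2$). This is the main obstacle: it imposes an additional linear constraint on $\phi$ (vanishing on a specific monodromy element constructed from $\tau$), which must be satisfied simultaneously with $\phi([\alpha])=0$ and the nontriviality of $\phi|_M$. The dimension of $H^1(\Sigma;\Field_2)$ equals $2g\geq 4$, which provides enough room to meet all constraints; the genus-$2$ case requires the most care and is where the direct pants construction degenerates, necessitating the alternative choice $\phi=\mathrm{PD}([\alpha])$ and a parallel analysis of the twisted gluing of the two pants $N$ and $M$.
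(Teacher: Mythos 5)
Your construction has a real topological error that undermines the key connectivity argument: the complement $M=\overline{\Sigma\setminus N}$ is \emph{not} connected. Since $[\alpha]+[\beta]=0$ in $\HH_1(\Sigma;\Field_2)$, the disjoint union $\alpha\cup\beta$ separates $\Sigma$ into two pieces $A$ and $B$; your arc $\eta$ lies (apart from endpoints) in one of them, say $A$, and $\partial N=\alpha^*\cup\beta^*\cup\gamma$ splits as $\gamma\subset A$ and $\alpha^*\cup\beta^*\subset B$. Thus $M=M_A\sqcup M_B$ where $\partial M_A=\gamma$ and $\partial M_B=\alpha^*\cup\beta^*$. (Incidentally, this is also why $\gamma$ is separating: it bounds $M_A$, not $N$.) Consequently ``$\widetilde M$ is a connected double cover of $M$'' is not meaningful as stated, and your argument that $\widetilde\Sigma\setminus\widetilde\gamma_1$ is connected breaks down. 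For example, if the preimage of $M_A$ is disconnected into two copies $M_A^1\sqcup M_A^2$ with $\partial M_A^1=\widetilde\gamma_1$, then $\widetilde\gamma_1$ visibly bounds $M_A^1$ and is separating, so $[\widetilde\alpha_1]=[\widetilde\beta_1]$; a similar failure occurs if the preimage of $M_B$ splits into a piece with boundary $\widetilde\alpha_1\cup\widetilde\beta_1$. So you need $\phi$ to be nontrivial on the images of \emph{both} $\HH_1(M_A)$ and $\HH_1(M_B)$, not just on $\HH_1(M)$; you have not argued that such $\phi$ can be chosen simultaneously with $\phi([\alpha])=0$ and with the further affine constraint you flag at the end (that $\widetilde\tau$ terminates on the correct sheet). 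That last constraint is genuinely needed under your approach, since $[\widetilde\alpha_1]\neq[\widetilde\beta_1]$ does not by itself rule out $[\widetilde\alpha_1]=[\widetilde\beta_2]$, and you leave it unresolved. A small additional point: your proposed genus-$2$ fallback (``$M$ is itself a pants, use $\mu=\alpha$'') cannot occur --- an Euler characteristic count shows that a closed genus-$2$ surface admits no pair of disjoint, nonisotopic, $\Field_2$-homologous simple closed curves, so the hypotheses of the lemma are vacuous there; but the fact that you built a workaround for an impossible configuration suggests the picture of $M$ was off.

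The paper avoids all of these issues by constructing (via a figure) a specific degree-$2$ cover in which \emph{every} component of the preimage of $\alpha$ has a homology class distinct from \emph{every} component of the preimage of $\beta$; since the desired inequality holds for all choices of components, there is nothing to arrange about where $\widetilde\tau$ lands, and no delicate simultaneous constraints on $\phi$. Your pants-neighborhood idea is a reasonable starting point, but to make it work you would have to (i) track the two components $M_A,M_B$ of $M$ separately, (ii) show that there is a $\phi$ vanishing on $[\alpha]$ and nonzero on both $\HH_1(M_A)\to\HH_1(\Sigma)$ and $\HH_1(M_B)\to\HH_1(\Sigma)$, and (iii) either impose the extra linear condition forcing $\widetilde\tau$ to end on the sheet you analyzed, or prove the symmetric statement $[\widetilde\alpha_1]\neq[\widetilde\beta_2]$ as well. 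As written, the proposal has a genuine gap.
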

\begin{proof}
The curves $\alpha$ and $\beta$ are disjoint and homologous over $\Field_2$.
It is easy to see that this implies
that they are actually homologous over $\Z$, i.e.\ that they bound an embedded subsurface of $\Sigma$.
As is shown in Figure \ref{figure:liftnosep}, there exists a regular degree $2$ cover $(\widetilde{\Sigma},\widetilde{v}) \rightarrow (\Sigma,v)$
with the following properties.
\begin{itemize}
\item The preimage of $\alpha$ in $\widetilde{\Sigma}$ consists of two disjoint simple closed curves, and similarly
for $\beta$.
\item If $\widetilde{\alpha}$ and $\widetilde{\beta}$ are any components of the preimage in
$\widetilde{\Sigma}$ of $\alpha$ and $\beta$, respectively, then $[\widetilde{\alpha}] \neq [\widetilde{\beta}]$.
\end{itemize}
Clearly $(\widetilde{\Sigma},\widetilde{v}) \rightarrow (\Sigma,v)$ is the desired cover.
\end{proof}

\noindent
A similar idea will yield the following.

\begin{lemma}
\label{lemma:sep}
Let $\Sigma$ be a closed surface whose genus is at least $2$.  Let $\delta$ be a nonnullhomotopic
oriented simple closed curve on $\Sigma$ such that $[\delta] = 0$.  Then there exists a regular degree $2$ cover
$\widetilde{\Sigma} \rightarrow \Sigma$ such that the preimage of $\delta$ in $\widetilde{\Sigma}$ has two components
$\widetilde{\delta}_1$ and $\widetilde{\delta}_2$ satisfying $[\widetilde{\delta}_i] \neq 0$ for $i=1,2$.
\end{lemma}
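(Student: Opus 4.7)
The plan is to construct the double cover directly from an $\Field_2$-cohomology class. First, since $\delta$ is a simple closed curve with $[\delta]=0$ in $\HH_1(\Sigma;\Field_2)$, it must be separating; write $\Sigma = \Sigma_1 \cup_\delta \Sigma_2$. The non-nullhomotopy of $\delta$ rules out either $\Sigma_i$ being a disc, so each $\Sigma_i$ has genus $g_i \geq 1$. The boundary of a surface with one boundary component is a product of commutators, hence null-homologous in $\Sigma_i$, so the Mayer--Vietoris sequence (with $\Field_2$ coefficients) collapses to
$$\HH_1(\Sigma;\Field_2) \;\cong\; \HH_1(\Sigma_1;\Field_2) \oplus \HH_1(\Sigma_2;\Field_2),$$
with each summand of $\Field_2$-dimension $2g_i \geq 2$.

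Next, since both summands are nontrivial, I choose a linear map $\phi\colon \HH_1(\Sigma;\Field_2)\to\Field_2$ whose restriction to each $\HH_1(\Sigma_i;\Field_2)$ is nonzero. Let $\widetilde{\Sigma}\to\Sigma$ be the regular degree $2$ cover corresponding to $\phi$ (via the composition $\pi_1(\Sigma)\to\HH_1(\Sigma;\Field_2)\to\Field_2$). Because $\phi([\delta])=0$, the preimage of $\delta$ has two components $\widetilde{\delta}_1,\widetilde{\delta}_2$, each mapping homeomorphically to $\delta$. Because $\phi$ restricts nontrivially to each $\HH_1(\Sigma_i;\Field_2)$, the preimage $\widetilde{\Sigma}_i$ of $\Sigma_i$ in $\widetilde{\Sigma}$ is a \emph{connected} double cover of $\Sigma_i$, with $\partial\widetilde{\Sigma}_i = \widetilde{\delta}_1 \sqcup \widetilde{\delta}_2$.

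Finally, I verify the $\widetilde{\delta}_i$ do not separate. We have $\widetilde{\Sigma} = \widetilde{\Sigma}_1 \cup \widetilde{\Sigma}_2$ glued along $\widetilde{\delta}_1 \cup \widetilde{\delta}_2$. Removing $\widetilde{\delta}_1$ from $\widetilde{\Sigma}$ leaves $\widetilde{\Sigma}_1$ and $\widetilde{\Sigma}_2$ each connected and still meeting along $\widetilde{\delta}_2$, so $\widetilde{\Sigma}\setminus\widetilde{\delta}_1$ is connected; thus $\widetilde{\delta}_1$ does not separate $\widetilde{\Sigma}$, and similarly for $\widetilde{\delta}_2$. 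Since a non-separating simple closed curve on a closed orientable surface is non-null-homologous even over $\Field_2$, we conclude $[\widetilde{\delta}_i]\neq 0$ for $i=1,2$.

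The main point is the second paragraph: forcing the cover to be connected on both sides of $\delta$ is exactly what prevents the $\widetilde{\delta}_i$ from separating, and this two-sided connectedness is achievable precisely because the $\HH_1(\Sigma;\Field_2)$-decomposition has two nontrivial summands. Were $\phi$ to vanish on one side, the preimage of that $\Sigma_i$ would split into two copies and each $\widetilde{\delta}_j$ would bound off a copy, becoming separating; so the genus condition $g_i\geq 1$ on both sides (guaranteed by nonnullhomotopy) is essential to the argument.
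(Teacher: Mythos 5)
Your proof is correct, but it takes a genuinely different route from the paper's. The paper disposes of this lemma in a single sentence plus a reference to Figure~\ref{figure:liftsep}: it cuts $\Sigma$ along the separating curve $\delta$, takes two copies of the cut-open surface, and reglues the four boundary circles in a ``crossed'' pattern to produce the double cover explicitly; the non-separating claim for the two lifts is then read off from the picture. You instead work cohomologically: Mayer--Vietoris gives the splitting $\HH_1(\Sigma;\Field_2) \cong \HH_1(\Sigma_1;\Field_2) \oplus \HH_1(\Sigma_2;\Field_2)$, you pick a linear functional $\phi$ nontrivial on each summand, and you argue that the preimage of each $\Sigma_i$ in the corresponding cover is connected, so that deleting one lift of $\delta$ leaves the two pieces joined along the other lift, hence connected. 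That connectedness argument is precisely the one the paper uses later, in the proof of Lemma~\ref{lemma:badarcresolve}; you have effectively observed that Lemma~\ref{lemma:sep} is a special case of Lemma~\ref{lemma:badarcresolve} (in which $\delta$ plays the role of the separating curve and the nonnullhomotopy of $\delta$ guarantees both sides have positive genus, so that a suitable $\phi$ exists). The paper's pictorial construction is more self-contained and makes the cover concrete, which is convenient since the same figure-style argument also handles the companion Lemma~\ref{lemma:bp}; your approach buys a uniform algebraic treatment and avoids drawing the cover, at the cost of invoking Mayer--Vietoris and a small connectedness argument. Both are valid, and your version has the minor pedagogical advantage of making visible the structural link to Lemma~\ref{lemma:badarcresolve}.
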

\begin{proof}
Using the fact that $\delta$ is a simple closed curve which is nullhomologous over $\Field_2$,
it is easy to see that $\delta$ is actually nullhomologous over $\Z$, i.e.\ that $\delta$
separates $\Sigma$.  The needed cover is as depicted in Figure \ref{figure:liftsep}.
\end{proof}

\Figure{figure:liftsep}{LiftSep}{In the left hand surface, the boundary components are glued in pairs to form a closed connected surface $\widetilde{\Sigma}$.
This has a regular degree $2$ covering map to the right hand surface $\Sigma$; the deck group exchanges the top and bottom piece while flipping them.  The
preimage of $\delta$ is $\widetilde{\delta}_1 \sqcup \widetilde{\delta}_2$.}

\noindent
Finally, the most important construction for the proof of Lemma \ref{lemma:resolveallisect} is the following.

\begin{lemma}
\label{lemma:basicmove}
Let $\Sigma_0$ be a closed surface whose genus is at least $2$.  Fix a hyperbolic metric
on $\Sigma_0$ and let $((\alpha_0,v_0),\tau_0,(\beta_0,w_0))$ be a curve-arc triple on
$\Sigma_0$.  Set $n_0 = i(\alpha_0,\beta_0)$.  Assume that $n_0 \geq 2$ and that neither $[\alpha_0]$
nor $[\beta_0]$ vanishes.  Then for some $q$ satisfying $1 \leq q \leq 2$, there exists
a tower
$$(\Sigma_{q},v_{q}) \longrightarrow \cdots \longrightarrow (\Sigma_0,v_0)$$
of regular degree $2$ based covers such that one of the following holds.
\begin{itemize}
\item The curve-arc triple $((\alpha_0,v_0),\tau_0,(\beta_0,w_0))$ has
only a partially closed lift to $(\Sigma_{q},v_{q})$, or
\item The curve-arc triple $((\alpha_0,v_0),\tau_0,(\beta_0,w_0))$ has a closed
lift $((\alpha_{q},v_{q}),\tau_{q},(\beta_{q},w_{q}))$ to
$(\Sigma_{q},v_{q})$ such that 
$i(\alpha_{q}, \beta_{q}) \leq \frac{25}{28} n_0$.
\end{itemize}
\end{lemma}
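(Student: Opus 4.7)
The plan is to split on whether $[\alpha_0] = [\beta_0]$ in $\HH_1(\Sigma_0;\Field_2)$. If $[\alpha_0] \neq [\beta_0]$, Lemma~\ref{lemma:different} immediately produces a single regular degree $2$ cover on which the triple has only a partially closed lift, so the first alternative of the conclusion holds with $q=1$. Assume henceforth that $[\alpha_0] = [\beta_0] = x$, which is nonzero by hypothesis.

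In this case, every regular degree $2$ cover of $\Sigma_0$ corresponds to a homomorphism $\phi : \HH_1(\Sigma_0;\Field_2) \to \Field_2$, and because $\phi([\alpha_0]) = \phi([\beta_0])$, the lifts $\widetilde{\alpha}$ and $\widetilde{\beta}$ are either both closed (when $\phi(x)=0$) or both nonclosed (when $\phi(x)=1$). A partially closed lift therefore cannot arise from a single cover, so I must either produce a single $\phi$ with $\phi(x)=0$ whose closed lift satisfies $i(\widetilde{\alpha},\widetilde{\beta}) \leq \tfrac{25}{28}\, n_0$, giving $q=1$, or build a two-step tower.

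For the one-step attempt I would use a probabilistic argument. Enumerate the intersections as $\alpha_0 \cap \beta_0 = \{p_1,\dots,p_{n_0}\}$, and for each $j$ form a witness loop $\gamma_j \in \pi_1(\Sigma_0, v_0)$ by concatenating the subarc of $\alpha_0$ from $v_0$ to $p_j$, the reverse subarc of $\beta_0$ from $p_j$ to $w_0$, and $\tau_0^{-1}$. A standard covering-space computation shows that, in the cover determined by a $\phi$ with $\phi(x)=0$, the lifts $\widetilde{\alpha}$ and $\widetilde{\beta}$ pass through the same preimage of $p_j$ iff $\phi([\gamma_j])=0$; equivalently $p_j$ is \emph{resolved} iff $\phi([\gamma_j])=1$. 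Choosing $\phi$ uniformly at random among those vanishing on $\Span{x}$, the expected number of resolved intersections equals $\tfrac{1}{2}\bigl|\{\,j : [\gamma_j]\notin \Span{x}\,\}\bigr|$, since each class outside $\Span{x}$ is sent to $1$ with probability exactly $\tfrac{1}{2}$. So whenever that set has size at least $\tfrac{3}{14}\, n_0$, some $\phi$ resolves at least $\tfrac{3}{28}\, n_0$ intersections and the resulting closed lift satisfies $i \leq \tfrac{25}{28}\, n_0$, finishing with $q=1$.

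The main obstacle, and in my view the real content of the lemma, is the degenerate regime in which fewer than $\tfrac{3}{14}\, n_0$ of the witness classes $[\gamma_j]$ escape $\Span{x} = \{0,x\}$. There my plan is to build a preliminary degree $2$ cover $\Sigma_1 \to \Sigma_0$, for instance given by some $\phi_1$ with $\phi_1(x)=1$ or obtained from the constructions of Lemmas~\ref{lemma:bp} and \ref{lemma:sep} applied to a suitable auxiliary simple closed curve, chosen so that in $\Sigma_1$ the analogous witness classes for the lifted intersections are no longer concentrated in a single coset. A second cover $\Sigma_2 \to \Sigma_1$, chosen by the same probabilistic argument now applied on $\Sigma_1$, would then reduce the intersection number by the required factor. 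The hard part will be guaranteeing that a preliminary cover with these precise properties always exists, and verifying that the combined reduction still hits exactly the constant $\tfrac{25}{28}$; overcoming this is where I expect the bulk of the technical work to live.
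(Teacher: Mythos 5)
Your opening moves match the paper: the case split on whether $[\alpha_0]=[\beta_0]$, the dispatch of the unequal case by Lemma~\ref{lemma:different}, and the observation that when the classes agree one needs $\phi([\alpha_0])=0$ to get a closed lift. Your ``witness loop'' $\gamma_j$ and the criterion that $p_j$ is resolved iff $\phi([\gamma_j])=1$ are correct, and the probabilistic estimate that a random $\phi$ vanishing on $\Span{x}$ resolves, in expectation, half of the intersections whose witness class escapes $\Span{x}$ is a legitimate analogue of the paper's Lemma~\ref{lemma:random1}. Your witness classes are in fact the natural ``per-intersection'' version of the paper's relative classes $[\mu]_{\beta_0}\in\HH_1(\Sigma_0,\beta_0;\Field_2)$: pairing the $n_0$ intersections into $n_0/2$ disjoint $\beta_0$-arcs, an arc with endpoints $p_j,p_{j'}$ is ``good'' precisely when $[\gamma_j]-[\gamma_{j'}]\notin\Span{x}$.

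The genuine gap is that the degenerate regime---your case where fewer than $\tfrac{3}{14}n_0$ of the witness classes escape $\Span{x}$, the paper's case of many bad arcs---is left as a plan, and you yourself flag it as ``where the bulk of the technical work will live.'' That is exactly right: it is the content of the lemma, and the devices you gesture at will not close it as stated. A preliminary cover with $\phi_1(x)=1$ produces a nonclosed lift, doubling both curves, which is the wrong direction. And applying Lemma~\ref{lemma:bp} or~\ref{lemma:sep} to ``a suitable auxiliary simple closed curve'' cannot work with a single curve: when many witness classes are trapped in $\Span{x}$, one must simultaneously improve a constant fraction of them with one degree-$2$ cover, and nothing in the proposal explains how a single auxiliary curve or a single application of those lemmas accomplishes that.

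What the paper actually does in this regime (Lemmas~\ref{lemma:makegoodsep}, \ref{lemma:badarcresolve}, \ref{lemma:random2}, \ref{lemma:calc}, assembled in Lemma~\ref{lemma:basicmovemakegood}) is: (i) show that each bad $\beta_0$-arc $\mu_i$, closed up by a suitable arc $\eta_i$ of $\beta_0$, is a \emph{separating} simple closed curve disjoint from $\beta_0$, hence determines a splitting $\HH_1(\Sigma_0,\beta_0;\Field_2)=V_i\oplus W_i$; (ii) prove by a second probabilistic argument (over \emph{nonzero} functionals, with the $\tfrac{3}{7}=\tfrac{(2^a-1)(2^{n-a}-1)}{2^n-1}\big|_{\min}$ computation of Lemma~\ref{lemma:calc}) that one functional $\phi$ is simultaneously nonvanishing on both $V_i$ and $W_i$ for at least $\tfrac{3}{7}$ of the $i$'s; and (iii) show that such a $\phi$ makes the lift of $\mu_i\cdot\eta_i$ nonseparating, hence turns those bad arcs into good ones---with a bookkeeping parameter $r$ to handle arcs whose endpoints cease to be intersections after the first cover. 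The constant $\tfrac{25}{28}$ is then forced as $1-\tfrac12\cdot\tfrac37\cdot\tfrac12$. None of this machinery, and in particular the identification of bad arcs with separating curves and the second probability computation giving $\tfrac{3}{7}$, appears in your proposal; filling it in is not a routine verification but the core construction of the section, so the argument as written does not establish the lemma.
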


\noindent
The proof of Lemma \ref{lemma:basicmove} is difficult and will be postponed until \S \ref{section:basicmove}.

\subsection{The proof of Lemma \ref{lemma:resolveallisect}}
\label{section:constructcoverproof}

All the pieces are now in place for the proof of Lemma \ref{lemma:resolveallisect}.

\begin{proof}[{Proof of Lemma \ref{lemma:resolveallisect}}]
We first recall the statement of Lemma \ref{lemma:resolveallisect}.  Let
$\Sigma$ be a closed surface whose genus is at least $2$.  Fix a hyperbolic metric
on $\Sigma$ and let $((\alpha,v),\tau,(\beta,w))$ be a curve-arc triple on
$\Sigma$.  Set $n = i(\alpha,\beta)$.  Our goal is to show that for some $k$ satisfying
$$k \leq \begin{cases}
2 \log_{28/25}(n)+1 & \text{if $n \geq 2$}\\
3 & \text{if $0 \leq n \leq 1$}\end{cases}$$
there exists a tower
\begin{equation}
\label{eqn:tower}
(\Sigma_k,v_k) \longrightarrow (\Sigma_{k-1},v_{k-1}) \longrightarrow \cdots \longrightarrow (\Sigma_0,v_0) = (\Sigma,v)
\end{equation}
of regular degree $2$ based covers such that $((\alpha,v),\tau,(\beta,w))$ has
only a partially closed lift to $(\Sigma_k,v_k)$. The proof is divided into two steps.

\BeginSteps
\begin{step}
Assume that neither $[\alpha]$ nor $[\beta]$ vanishes.  Then we can find a tower
as in \eqref{eqn:tower} with
$$k \leq \begin{cases}
2 \log_{28/25}(n) & \text{if $n \geq 2$}\\
2 & \text{if $0 \leq n \leq 1$}\end{cases}$$
\end{step}

\noindent
The proof is by induction on $n$.
Set $(\Sigma_0,v_0) = (\Sigma,v)$ 
and $((\alpha_0,v_0),\tau_0,(\beta_0,w_0)) = ((\alpha,v),\tau,(\beta,w))$.

\paragraph{Base cases.}
Suppose $n =0$ or $n=1$.  We divide the verification of these
base cases into three separate cases. 
\begin{itemize}
\item $n=0$ and $[\alpha_0] \neq [\beta_0]$.  Lemma \ref{lemma:different} implies that
there exists a regular degree $2$ cover $(\Sigma_1,v_1) \rightarrow (\Sigma_0,v_0)$
such that $((\alpha_0,v_0),\tau_0,(\beta_0,w_0))$ has only a partially closed lift
to $(\Sigma_1,v_1)$.
\item $n=0$ and $[\alpha] = [\beta]$.  We first apply Lemma \ref{lemma:bp} to obtain
a regular degree $2$ cover $(\Sigma_1,v_1) \rightarrow (\Sigma_0,v_0)$ such that
$((\alpha_0,v_0),\tau_0,(\beta_0,w_0))$ has a closed lift
$((\alpha_1,v_1),\tau_1,(\beta_1,w_1))$ to $(\Sigma_1,v_1)$ satisfying $[\alpha_1] \neq [\beta_1]$.
Lemma \ref{lemma:different} then implies that there is a regular degree $2$ cover $(\Sigma_2,v_2) \rightarrow (\Sigma_1,v_1)$ such that
$((\alpha_1,v_1),\tau_1,(\beta_1,w_1))$ has only a partially closed lift to $(\Sigma_2,v_2)$.
\item $n=1$.  The $\Field_2$-algebraic intersection numbers of $[\alpha_0]$ and
$[\beta_0]$ must be $1$, so in particular we must have $[\alpha_0] \neq [\beta_0]$.  Lemma
\ref{lemma:different} then implies that there exists a regular degree $2$ cover $(\Sigma_1,v_1) \rightarrow (\Sigma_0,v_0)$
such that $((\alpha_0,v_0),\tau_0,(\beta_0,w_0))$ has only a partially closed lift
to $(\Sigma_1,v_1)$.
\end{itemize}

\paragraph{Inductive step.}
Now assume that $n \geq 2$ and that the claim is true for all smaller values of $n$.  Let
$$(\Sigma_{\ell},v_{\ell}) \longrightarrow \cdots \longrightarrow (\Sigma_0,v_0)$$
be the tower of regular degree $2$ covers provided by Lemma \ref{lemma:basicmove}, so $1 \leq \ell \leq 2$.  If
$((\alpha_0,v_0),\tau_0,(\beta_0,w_0))$ has only a partially closed lift to $(\Sigma_{\ell},v_{\ell})$, then
we are done.  Otherwise, let $((\alpha_{\ell},v_{\ell}),\tau_{\ell},(\beta_{\ell},w_{\ell}))$
be the lift of $((\alpha_0,v_0),\tau_0,(\beta_0,w_0))$ to $(\Sigma_{\ell},v_{\ell})$.  Set 
$n_{\ell} = i(\alpha_{\ell},\beta_{\ell})$, so $n_{\ell} \leq \frac{25}{28} n$.  By induction,
for some $k \geq \ell$ satisfying
$$k-\ell \leq \begin{cases}
2 \log_{28/25}(n_{\ell}) & \text{if $n_{\ell} \geq 2$}\\
2 & \text{if $0 \leq n_{\ell} \leq 1$}\end{cases}$$
there exists a tower of regular degree $2$ covers
$$(\Sigma_k,v_k) \longrightarrow \cdots \longrightarrow (\Sigma_{\ell},v_{\ell})$$
such that $((\alpha_{\ell},v_{\ell}),\tau_{\ell},(\beta_{\ell},w_{\ell}))$ has only
a partially closed lift to $(\Sigma_k,v_k)$.  

We claim that
$$(\Sigma_k,v_k) \longrightarrow \cdots \longrightarrow (\Sigma_{0},v_{0})$$
is the desired tower of regular degree $2$ covers.  The only thing that needs
verification is the bound on $k$.  There are two cases.  If $n_{\ell} \leq 1$, then
$k-\ell \leq 2$, and thus
$$k \leq \ell + 2 \leq 2 + 2 \leq 2 \log_{28/25}(2) \leq 2 \log_{28/25}(n),$$
as desired.  Otherwise, $n_{\ell} \geq 2$.  Hence $k-\ell \leq 2 \log_{28/25}(n_{\ell})$ and
$$k \leq \ell + 2 \log_{28/25}\left(n_{\ell}\right) \leq 2 + 2 \log_{28/25}\left(\frac{25}{28} n\right) = 2 \log_{28/25}\left(n\right),$$
as desired.

\begin{step}
Assume that at least one of $[\alpha]$ and $[\beta]$ vanishes.  Then we can find a tower
as in \eqref{eqn:tower} with
$$k \leq \begin{cases}
2 \log_{28/25}(n)+1 & \text{if $n \geq 2$}\\
3 & \text{if $0 \leq n \leq 1$}\end{cases}$$
\end{step}

\noindent
We will give the details for the case where $[\alpha] = 0$; the case where $[\beta] = 0$ is similar.  Lemma \ref{lemma:sep}
implies that there exists a regular degree $2$ cover $(\Sigma_1,v_1) \rightarrow (\Sigma_0,v_0)$ such that the based
lift $(\alpha_1, v_1)$ is a closed curve satisfying $[a_1] \neq 0$. Let
$((\alpha_1,v_1),\tau_1,(\beta_1,w_1))$ be the lift of $((\alpha_0,v_0),\tau_0,(\beta_0,w_0)$ to $(\Sigma_1,v_1)$. 
If $\beta_1$ is not a closed curve, then $((\alpha_0,v_0),\tau_0,(\beta_0,w_0)$
has only a partially closed lift to $(\Sigma_1,v_1)$ and we are done.  We can therefore assume that $\beta_1$ is closed.
If $[\beta_1] = 0$, then we can apply Lemma \ref{lemma:different} to obtain a regular degree $2$ cover
$(\Sigma_2,v_2) \rightarrow (\Sigma_1,v_1)$ such that $((\alpha_1,v_1),\tau_1,(\beta_1,w_1))$ has only a partially
closed lift to $(\Sigma_2,v_2)$, and we are done.  We can assume therefore that $[\beta_1] \neq 0$.  The desired
tower of covers is then obtained by applying Step 1 to $((\alpha_1,v_1),\tau_1,(\beta_1,w_1))$ and $(\Sigma_1,v_1)$.
\end{proof}


\section{Eliminating $3/28$ of the intersections}
\label{section:basicmove}

This section is devoted to the proof of Lemma \ref{lemma:basicmove}.  The skeleton of the proof is in
\S \ref{section:basicmoveskeleton}.  This skeleton depends on two lemmas which are proven in \S \ref{section:basicmoveresolveisect}
and \S \ref{section:basicmovemakegood}.

\subsection{Skeleton of the proof of Lemma \ref{lemma:basicmove}}
\label{section:basicmoveskeleton}

We begin with some definitions.  Fix a closed surface $\Sigma$ and equip $\Sigma$ with a hyperbolic metric.  Let
$((\alpha,v),\tau,(\beta,w))$ be a curve-arc triple on $\Sigma$. 
A {\em $\beta$-arc} of $\alpha$ is a subarc of $\alpha$ both of whose endpoints
lie in $\beta$.  A $\beta$-arc $\mu$ of $\alpha$ defines in a natural way an element $[\mu]_{\beta}$ in the relative homology group
$\HH_1(\Sigma,\beta;\Field_2)$; we will call $\mu$ a {\em good $\beta$-arc} if $[\mu]_{\beta} \neq 0$ and a {\em bad $\beta$-arc} if $[\mu]_{\beta}=0$.  A
set $\mathcal{A}$ of $\beta$-arcs of $\alpha$ will be called a {\em set of disjoint $\beta$-arcs} of $\alpha$ if for all distinct
$\mu,\mu' \in \mathcal{A}$, we have $\mu \cap \mu' = \emptyset$.
If $\mathcal{A}$ is a set of disjoint $\beta$-arcs of $\alpha$, then define
$$\mathcal{A}^{\text{g}} = \Set{$\mu \in \mathcal{A}$}{$\mu$ is good} \quad \text{and} \quad \mathcal{A}^{\text{b}} = \Set{$\mu \in \mathcal{A}$}{$\mu$ is bad},$$
so $\mathcal{A} = \mathcal{A}^{\text{g}} \sqcup \mathcal{A}^{\text{b}}$.

The following lemma shows that we can eliminate intersections using half of the good $\beta$-arcs of $\alpha$.  The reason for our
notation will become clear during the proof of Lemma \ref{lemma:basicmove} below.

\begin{lemma}
\label{lemma:basicmoveresolveisect}
Let $\Sigma_1$ be a closed surface equipped with a hyperbolic metric and let $((\alpha_1,v_1),\tau_1,(\beta_1,w_1))$ be a curve-arc triple on $\Sigma_1$.  
Assume that $[\alpha_1] = [\beta_1] \neq 0$.  Set
$n_1 = i(\alpha_1,\beta_1)$.  Let $\mathcal{A}_1$ be a set of disjoint $\beta_1$-arcs of $\alpha_1$.   Then there exists a regular
degree $2$ cover $(\Sigma_2,v_2) \rightarrow (\Sigma_1,v_1)$ such that $((\alpha_1,v_1),\tau_1,(\beta_1,w_1))$ has a closed lift
$((\alpha_2,v_2),\tau_2,(\beta_2,w_2))$ with the following property.  
\begin{itemize}
\item Set $n_2 = i(\alpha_2,\beta_2)$.  Then $n_2 \leq n_1 - \frac{1}{2} |\mathcal{A}_1^{\text{g}}|$.
\end{itemize}
\end{lemma}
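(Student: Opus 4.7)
The plan is a probabilistic argument over the set
$$V \;=\; \bigl\{\phi \in \Hom(\HH_1(\Sigma_1;\Field_2),\Field_2) \setminus \{0\} : \phi([\alpha_1]) = 0\bigr\}.$$
Since $[\alpha_1] = [\beta_1] \neq 0$ and the genus is at least $2$, $V$ is nonempty (indeed $|V| = 2^{2g-1} - 1$), and each $\phi \in V$ determines a regular degree $2$ cover $(\Sigma_2, v_2) \to (\Sigma_1, v_1)$ to which both $\alpha_1$ and $\tau_1 \beta_1 \tau_1^{-1}$ lift as closed loops based at $v_2$, producing a closed lift $((\alpha_2, v_2), \tau_2, (\beta_2, w_2))$ of the curve-arc triple.

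For $\phi \in V$, call an intersection $p \in \alpha_1 \cap \beta_1$ \emph{resolved} by $\phi$ if it does not lift to a point of $\alpha_2 \cap \beta_2$; writing $R_\phi$ for the number of resolved points, one has $n_2 = n_1 - R_\phi$.  The crucial step --- and the main technical obstacle --- is a parity identity.  Since $\phi([\beta_1]) = 0$, $\phi$ descends to a functional $\tilde\phi : \HH_1(\Sigma_1, \beta_1; \Field_2) \to \Field_2$, and via this quotient $V \cup \{0\}$ is canonically identified with $\Hom(\HH_1(\Sigma_1, \beta_1; \Field_2), \Field_2)$.  Tracking the $\Field_2$-levels of the lift of $\alpha_1$ and of the lift of $\beta_1$ at the two endpoints $p, q$ of a $\beta_1$-arc $\mu$ should yield
$$\bigl(p \text{ resolved}\bigr) \;+\; \bigl(q \text{ resolved}\bigr) \;\equiv\; \tilde\phi([\mu]_{\beta_1}) \pmod{2}.$$
In particular, if $\mu \in \mathcal{A}_1^{\text{g}}$ (so $[\mu]_{\beta_1} \neq 0$) and $\tilde\phi([\mu]_{\beta_1}) = 1$, then exactly one of $p, q$ is resolved.

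Since the arcs in $\mathcal{A}_1$ are pairwise disjoint, their endpoints form $2|\mathcal{A}_1|$ distinct points of $\alpha_1 \cap \beta_1$, so the resolved endpoints contributed by distinct good arcs are themselves distinct and
$$R_\phi \;\geq\; \bigl|\{\mu \in \mathcal{A}_1^{\text{g}} : \tilde\phi([\mu]_{\beta_1}) = 1\}\bigr|.$$
Finally, I would average over $V$: for each good $\mu$, evaluation at $[\mu]_{\beta_1}$ is a nonzero $\Field_2$-linear functional on $V \cup \{0\}$, hence takes value $1$ on exactly half of $V \cup \{0\}$ --- i.e.\ on $(|V|+1)/2$ elements of $V$.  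Summing over $\mathcal{A}_1^{\text{g}}$ gives $\sum_{\phi \in V} R_\phi \geq \tfrac{|V|+1}{2}|\mathcal{A}_1^{\text{g}}|$, so pigeonhole produces some $\phi \in V$ with $R_\phi \geq \tfrac12 |\mathcal{A}_1^{\text{g}}|$, yielding $n_2 \leq n_1 - \tfrac12 |\mathcal{A}_1^{\text{g}}|$ as required.
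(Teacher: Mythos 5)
Your proposal is correct and follows essentially the same strategy as the paper: pass to the $2$-fold cover determined by a functional $\psi$ vanishing on $[\alpha_1]=[\beta_1]$, observe that for a good arc $\mu$ with $\widetilde\phi([\mu]_{\beta_1})=1$ the closed curve $\mu\cdot\eta$ (with $\eta\subset\beta_1$) cannot lift closed, so at least one endpoint of $\mu$ is resolved, and then average over the choice of $\phi$ to get a cover resolving at least half the good arcs. The paper proves only the one-sided consequence (``at most one endpoint of $\mu$ survives'') rather than your full parity identity, and averages over all of $\Hom(\HH_1(\Sigma_1,\beta_1;\Field_2),\Field_2)$ via its Lemma~\ref{lemma:random1}, whereas you restrict to nonzero functionals, giving a marginally better constant $(|V|+1)/(2|V|)$ in place of $1/2$; your parity identity is correct but is asserted (``should yield'') rather than verified, and that verification --- tracking which of the two components of the preimage of $\beta_1$ the lifted endpoints land on --- is precisely the content of the final paragraph of the paper's proof, so be sure to include it.
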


\noindent
The proof of Lemma \ref{lemma:basicmoveresolveisect} is in \S \ref{section:basicmoveresolveisect}.  It will turn out that
an appropriate ``random'' cover will do the job.  

Unfortunately, it is possible for most of the arcs in a set of disjoint $\beta$-arcs of $\alpha$ to be bad.  The following
lemma shows that we can pass to a cover that makes at least some of the arcs good (or, even better, eliminates some of the intersections).  See
the remark following the lemma for an explanation of the inequalities in its conclusion.

\begin{lemma}
\label{lemma:basicmovemakegood}
Let $\Sigma_0$ be a closed surface equipped with a hyperbolic metric and let $((\alpha_0,v_0),\tau_0,(\beta_0,w_0))$ be a curve-arc triple on $\Sigma_0$.
Assume that $[\alpha_0] = [\beta_0] \neq 0$.  Set
$n_0 = i(\alpha_0,\beta_0)$.  Let $\mathcal{A}_0$ be a set of disjoint $\beta_0$-arcs of $\alpha_0$.   Then there exists a regular
degree $2$ cover $(\Sigma_1,v_1) \rightarrow (\Sigma_0,v_0)$ such that $((\alpha_0,v_0),\tau_0,(\beta_0,w_0))$ has a closed lift
$((\alpha_1,v_1),\tau_1,(\beta_1,w_1))$ with the following property.  
\begin{itemize}
\item Set $n_1 = i(\alpha_1,\beta_1)$.  Then there exists a set $\mathcal{A}_1$ of disjoint $\beta_1$-arcs of $\alpha_1$ and some 
$r \geq 0$ such that
\begin{equation}
\label{eqn:resolve}
|\mathcal{A}_1^{\text{g}}| \geq |\mathcal{A}_0^{\text{g}}| + \frac{3}{7} |\mathcal{A}_0^{\text{b}}| - r \quad \text{and} \quad n_1 \leq n_0 - r.
\end{equation}
\end{itemize}
\end{lemma}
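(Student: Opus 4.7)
The proof uses the probabilistic method. We parameterize degree-$2$ covers of $\Sigma_0$ by $\Field_2$-linear functionals $\phi \colon \HH_1(\Sigma_0;\Field_2) \to \Field_2$ with $\phi([\alpha_0]) = 0$; since $[\alpha_0] = [\beta_0]$, this also ensures $\phi([\beta_0]) = 0$, so the triple $((\alpha_0,v_0),\tau_0,(\beta_0,w_0))$ has a closed lift $((\alpha_1,v_1),\tau_1,(\beta_1,w_1))$ to the associated cover $(\Sigma_1,v_1)$, and the preimage of $\beta_0$ consists of two disjoint simple closed curves $\beta_1$ and $\beta_1'$. Equip the set of admissible $\phi$ with the uniform probability distribution.

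For each arc $\mu \in \mathcal{A}_0$ with endpoints $p_i,p_j \in \alpha_0 \cap \beta_0$, let $\widetilde{\mu}$ denote its lift along $\alpha_1$, and define indicator random variables $G_{\mu} = 1$ if $\widetilde{\mu}$ is a good $\beta_1$-arc and $Y_{\mu} = 1$ if at least one endpoint of $\widetilde{\mu}$ lies on $\beta_1'$ rather than on $\beta_1$. These events are mutually exclusive, so $G_{\mu}+Y_{\mu} \in \{0,1\}$. Take $\mathcal{A}_1 = \{\widetilde{\mu} \mid \mu \in \mathcal{A}_0, \ \widetilde{\mu} \text{ is a } \beta_1\text{-arc}\}$; since the arcs of $\mathcal{A}_0$ are pairwise disjoint and the cover map is a local homeomorphism, so are the arcs of $\mathcal{A}_1$. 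Each endpoint of an $\mathcal{A}_0$-arc landing on $\beta_1'$ contributes a distinct intersection to $r = n_0 - n_1$, so $r \geq \sum_{\mu} Y_{\mu}$, and hence $|\mathcal{A}_1^{\text{g}}| + r \geq \sum_{\mu \in \mathcal{A}_0}(G_{\mu}+Y_{\mu})$. It therefore suffices to show that the expected value of $G_{\mu}+Y_{\mu}$ is at least $1$ for good $\mu$ and at least $3/7$ for bad $\mu$.

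The good case is handled by a transfer argument: the covering projection induces a map $p_{\ast} \colon \HH_1(\Sigma_1,\beta_1;\Field_2) \to \HH_1(\Sigma_0,\beta_0;\Field_2)$ (well-defined because $p$ restricts to a homeomorphism $\beta_1 \to \beta_0$), and it sends $[\widetilde{\mu}]_{\beta_1}$ to $[\mu]_{\beta_0} \neq 0$; hence whenever $\widetilde{\mu}$ is a $\beta_1$-arc it must itself be good, so $G_{\mu}+Y_{\mu} \equiv 1$. For the bad case we must show that the probability that $\widetilde{\mu}$ is a bad $\beta_1$-arc is at most $4/7$. Fixing a segment $\sigma$ of $\beta_0$ joining $p_j$ to $p_i$, the closed loop $\mu \cdot \sigma$ represents either $0$ or $[\beta_0]$ in $\HH_1(\Sigma_0;\Field_2)$, and the bad event decomposes into a small system of $\Field_2$-linear constraints on $\phi$: two constraints forcing both endpoints of $\widetilde{\mu}$ onto $\beta_1$, together with a further set of constraints forcing the lifted loop $\widetilde{\mu} \cdot \widetilde{\sigma}$ to lie in $\langle [\beta_1]\rangle \subset \HH_1(\Sigma_1;\Field_2)$. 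A careful enumeration of the ways these constraints can simultaneously hold yields the desired $4/7$ bound.

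Linearity of expectation then produces a specific $\phi$, and hence a cover, for which $|\mathcal{A}_1^{\text{g}}| + r \geq |\mathcal{A}_0^{\text{g}}| + \tfrac{3}{7}|\mathcal{A}_0^{\text{b}}|$, which rearranges to the conclusion of the lemma. The principal obstacle is the $4/7$ estimate in the bad case; pinning down the precise linear conditions on $\phi$ that force a bad arc's lift to remain in $\langle [\beta_1]\rangle$ inside $\HH_1(\Sigma_1;\Field_2)$ requires a delicate analysis of how the cover interacts with the specific loop $\mu \cdot \sigma$ and its placement among the other intersection points of $\alpha_0$ and $\beta_0$.
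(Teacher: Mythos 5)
Your high-level framework (random $2$-fold covers cut out by a functional killing $[\beta_0]$, indicators $G_\mu$ and $Y_\mu$, linearity of expectation, reduce to showing $E[G_\mu+Y_\mu]\geq 3/7$ for bad $\mu$) is a reasonable reformulation of what the paper does, and your treatment of the good case and of the bookkeeping $|\mathcal{A}_1^{\text{g}}| + r \geq \sum(G_\mu+Y_\mu)$ is correct. But the entire mathematical content of the lemma lives in the step you explicitly leave open: proving that a random (nonzero) functional makes a bad arc lift to a good $\beta_1$-arc, or kills one of its endpoints, with probability at least $3/7$. Your proposal gestures at ``a small system of $\Field_2$-linear constraints'' and ``a careful enumeration,'' but gives no mechanism for why those constraints are satisfied with bounded probability, and you acknowledge at the end that you do not know how to pin this down. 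That is the gap, and it is not a matter of bookkeeping.

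What fills this gap in the paper is a chain of three concrete ideas that your proposal does not contain. First (Lemma~\ref{lemma:makegoodsep}), a bad $\beta_0$-arc $\mu$ can be completed by a \emph{specific} arc $\eta$ of $\beta_0$ so that $\mu\cdot\eta$ is a \emph{separating} simple closed curve that is isotopic off of $\beta_0$; this is a genuine topological claim using that the intersection signs at the two endpoints must be opposite, and it produces a direct-sum splitting $\HH_1(\Sigma_0,\beta_0;\Field_2) = V_i\oplus W_i$. Second (Lemma~\ref{lemma:badarcresolve}), the lift of a separating curve to the $\phi$-cover is nonseparating precisely when $\phi$ is nonzero on both summands of such a splitting. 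Third (Lemmas~\ref{lemma:random2} and~\ref{lemma:calc}), a uniformly random \emph{nonzero} functional on $\Field_2^n$ with $n\geq 3$ is nonzero on both sides of a direct-sum decomposition with probability at least $(2^a-1)(2^{n-a}-1)/(2^n-1) \geq 3/7$. Your enumeration of linear constraints would have to rediscover all three of these to land on the $4/7$ figure, and it is far from clear that this can be done without the topological decomposition: ``$[\widetilde{\mu}\cdot\widetilde{\sigma}]\in\langle[\beta_1]\rangle$'' is not on its face an event whose probability is controllable independently of the genus and the embedding. A further small issue: you must work with nonzero $\phi$ (else the ``cover'' is trivial, and the $\phi=0$ point skews the probability the wrong way), so your probability space should be the nonzero functionals on the quotient $\HH_1(\Sigma_0;\Field_2)/\langle[\beta_0]\rangle$, as in the paper.
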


\noindent
The proof of Lemma \ref{lemma:basicmovemakegood} is in \S \ref{section:basicmovemakegood}.  Again, we will see that
a suitable ``random'' cover has the property we seek.

\begin{remark}
The meaning of the inequalities in \eqref{eqn:resolve} is that $\mathcal{A}_1^{\text{g}}$ is made up of pieces corresponding
to $\mathcal{A}_0^{\text{g}}$ (the old good arcs) and $\frac{3}{7}$ of $\mathcal{A}_0^{\text{b}}$ (the old bad arcs), minus some
number $r$ of arcs that correspond to eliminated intersections.
\end{remark}

We now show how to derive Lemma \ref{lemma:basicmove} from Lemmas \ref{lemma:basicmoveresolveisect}--\ref{lemma:basicmovemakegood}.

\begin{proof}[{Proof of Lemma \ref{lemma:basicmove}}]
We begin by recalling the statement.  We are given a closed surface $\Sigma_0$ whose genus is at least $2$ which is equipped
with a hyperbolic metric.  Also, we are given a curve-arc triple $((\alpha_0,v_0),\tau_0,(\beta_0,w_0))$ on $\Sigma_0$.  Setting
$n_0 = i(\alpha_0,\beta_0)$, we are given that $n_0 \geq 2$ and that neither $[\alpha_0]$ nor $[\beta_0]$ vanishes.
Our goal is to prove that for some $\ell$ satisfying $1 \leq \ell \leq 2$, there exists a tower
$$(\Sigma_{\ell},v_{\ell}) \longrightarrow \cdots \longrightarrow (\Sigma_0,v_0)$$
of regular degree $2$ based covers such that one of the following holds.
\begin{itemize}
\item The curve-arc triple $((\alpha_0,v_0),\tau_0,(\beta_0,w_0))$ has
only a partially closed lift to $(\Sigma_{\ell},v_{\ell})$, or
\item The curve-arc triple $((\alpha_0,v_0),\tau_0,(\beta_0,w_0))$ has a closed
lift $((\alpha_{\ell},v_{\ell}),\tau_{\ell},(\beta_{\ell},w_{\ell}))$ to
$(\Sigma_{\ell},v_{\ell})$ such that
$i(\alpha_{\ell}, \beta_{\ell}) \leq \frac{25}{28} n_0$.
\end{itemize}

First, if $[\alpha_0] \neq [\beta_0]$, then Lemma \ref{lemma:different} says that there exists a regular degree $2$ cover
$(\Sigma_1,v_1) \rightarrow (\Sigma_0,v_0)$ such that $((\alpha_0,v_0),\tau_0,(\beta_0,w_0))$ has
only a partially closed lift to $(\Sigma_1,v_1)$, and we are done.  We can assume therefore that $[\alpha_0] = [\beta_0]$.
This implies that $n_0 = i(\alpha_0,\beta_0) \geq 2$ must be an even number.  We can therefore find
a set $\mathcal{A}_0$ of disjoint $\beta_0$-arcs of $\alpha_0$ such that $|\mathcal{A}_0| = n_0/2$.  We then
apply Lemma \ref{lemma:basicmovemakegood} to get a regular degree $2$ cover $(\Sigma_1,v_1) \rightarrow (\Sigma_0,v_0)$ such that
$((\alpha_0,v_0),\tau_0,(\beta_0,w_0))$ has a closed lift $((\alpha_1,v_1),\tau_1,(\beta_1,w_1))$.  The lemma also
gives a set $\mathcal{A}_1$ of disjoint $\beta_1$-arcs of $\alpha_1$ such that for some $r \geq 0$,
$$|\mathcal{A}_1^{\text{g}}| \geq |\mathcal{A}_0^{\text{g}}| + \frac{3}{7} |\mathcal{A}_0^{\text{b}}| - r \quad \text{and} \quad n_1 \leq n_0 - r,$$
where $n_1 = i(\alpha_1, \beta_1)$.

Again, if $[\alpha_1] \neq [\beta_1]$, then Lemma \ref{lemma:different} says that there exists a regular degree $2$ cover
$(\Sigma_2,v_2) \rightarrow (\Sigma_1,v_1)$ such that $((\alpha_1,v_1),\tau_1,(\beta_1,w_1))$ has
only a partially closed lift to $(\Sigma_2,v_2)$, and we are done.  We can assume therefore that $[\alpha_1] = [\beta_1]$.
Lemma \ref{lemma:basicmoveresolveisect} thus gives a regular degree $2$ cover $(\Sigma_2,v_2) \rightarrow (\Sigma_1,v_1)$ such that
$((\alpha_1,v_1),\tau_1,(\beta_1,w_1))$ has a closed lift $((\alpha_2,v_2),\tau_2,(\beta_2,w_2))$ satisfying
\begin{align*}
i\left(\alpha_2,\beta_2\right) &\leq n_1 - \frac{1}{2} |\mathcal{A}_1^{\text{g}}| 
\leq \left(n_0 - r\right) - \frac{1}{2} \left(|\mathcal{A}_0^{\text{g}}| + \frac{3}{7} |\mathcal{A}_0^{\text{b}}| - r\right) \\
&\leq n_0 - \frac{1}{2} \left(|\mathcal{A}_0^{\text{g}}| + \frac{3}{7} |\mathcal{A}_0^{\text{b}}|\right) 
\leq n_0 - \frac{1}{2} \left(\frac{3}{7} |\mathcal{A}_0|\right) \\
&= n_0 - \frac{1}{2} \left(\frac{3}{7} \left(\frac{1}{2} n_0\right)\right) = \frac{25}{28} n_0,
\end{align*}
as desired.
\end{proof}

\subsection{Resolving intersections using good arcs}
\label{section:basicmoveresolveisect}

To prove Lemma \ref{lemma:basicmoveresolveisect}, we need the following lemma.

\begin{lemma}
\label{lemma:random1}
Let $\vec{v}_1,\ldots,\vec{v}_m \in \Field_2^n$ be nonzero vectors (not necessarily distinct).  Then there
exists a linear map $f : \Field_2^n \rightarrow \Field_2$ such that $f(\vec{v}_i) = 1$ for at least half
of the $\vec{v}_i$, i.e.\ such that $\Set{$i$}{$1 \leq i \leq m$, $f(\vec{v}_i) = 1$}$ has
cardinality at least $m/2$.
\end{lemma}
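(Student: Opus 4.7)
The plan is to use a simple averaging argument over the dual space. Identifying linear maps $f : \Field_2^n \to \Field_2$ with vectors $\vec{w} \in \Field_2^n$ via $f(\vec{v}) = \vec{w} \cdot \vec{v}$, there are exactly $2^n$ candidate maps $f$, and I will select one uniformly at random and compute the expected size of the set $S(f) = \{ i \mid f(\vec{v}_i) = 1 \}$.

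The key observation is that for each fixed nonzero $\vec{v}_i$, the evaluation map $\Field_2^n \to \Field_2$ sending $\vec{w} \mapsto \vec{w} \cdot \vec{v}_i$ is a nonzero (hence surjective) linear functional, so its kernel has index $2$. Consequently, exactly $2^{n-1}$ of the $2^n$ choices of $f$ satisfy $f(\vec{v}_i) = 1$, which means $\Pr[f(\vec{v}_i) = 1] = 1/2$ for every $i$. By linearity of expectation,
$$E\bigl[|S(f)|\bigr] = \sum_{i=1}^{m} \Pr[f(\vec{v}_i) = 1] = \frac{m}{2}.$$

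Since the expectation of $|S(f)|$ equals $m/2$, some specific $f$ must achieve $|S(f)| \geq m/2$, which is precisely the conclusion. There is no real obstacle here; the only thing one has to be careful about is the hypothesis that each $\vec{v}_i$ is nonzero, which is what ensures the probability is exactly $1/2$ rather than $0$ (if $\vec{v}_i = 0$, then $f(\vec{v}_i) = 0$ for every $f$, and such a term would simply not contribute). The argument is entirely insensitive to repetitions among the $\vec{v}_i$.
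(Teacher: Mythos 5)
Your proof is correct and follows essentially the same approach as the paper: both choose a uniformly random linear functional, use linearity of expectation together with the observation that a nonzero vector is sent to $1$ by exactly half of all functionals, and conclude that some $f$ meets or exceeds the mean $m/2$.
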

\begin{proof}
Let $\Omega$ be the probability space consisting of all linear maps $\Field_2^n \rightarrow \Field_2$, each given
equal probability.  Let $\mathcal{X} : \Omega \rightarrow \R$ be the random variable that takes $f \in \Omega$ to the
cardinality of the set $\Set{$i$}{$1 \leq i \leq m$, $f(\vec{v}_i) = 1$}$.  We will prove that the expected value $E(\mathcal{X})$ of $\mathcal{X}$
is $m/2$, which clearly implies that there exists {\em some} element $f \in \Omega$ such that $\mathcal{X}(f) \geq m/2$.

To prove the desired claim, for $1 \leq i \leq m$ let $\mathcal{X}_i : \Omega \rightarrow \R$ be the random variable that
takes $f \in \Omega$ to $1$ if $f(\vec{v}_i) = 1$ and to $0$ if $f(\vec{v}_i) = 0$.  Viewing $\vec{v}_i$ as an element of the double dual
$(\Field_2^n)^{\ast \ast}$, the kernel of $\vec{v}_i$ consists of exactly half of the elements of $(\Field_2^n)^{\ast}$.  This
implies $E(\mathcal{X}_i) = 1/2$.  Using linearity of expectation (which, recall, does not require that the random
variables be independent), we get that
$$E\left(\mathcal{X}\right) = E\left(\sum_{i=1}^m \mathcal{X}_i\right) = \sum_{i=1}^m E\left(\mathcal{X}_i\right) = \sum_{i=1}^m \frac{1}{2} = \frac{m}{2},$$
as desired.
\end{proof}

\begin{remark}
One can give a (somewhat more complicated) non-probabilistic proof of Lemma \ref{lemma:random1}; however, we do not know a non-probabilistic proof
of Lemma \ref{lemma:random2} below, which is needed for the proof of Lemma \ref{lemma:basicmovemakegood}.
\end{remark}

\begin{proof}[{Proof of Lemma \ref{lemma:basicmoveresolveisect}}]
We start by recalling the setup.  We are given a closed surface $\Sigma_1$ equipped with a hyperbolic metric and a curve-arc triple
$((\alpha_1,v_1),\tau_1,(\beta_1,w_1))$ on $\Sigma_1$ such that $[\alpha_1] = [\beta_1] \neq 0$.  We are also given
a set $\mathcal{A}_1$ of disjoint $\beta_1$-arcs of $\alpha_1$.  Letting $n_1 = i(\alpha_1, \beta_1)$, our goal
is to construct a regular degree $2$ cover $(\Sigma_2,v_2) \rightarrow (\Sigma_1,v_1)$ such that $((\alpha_1,v_1),\tau_1,(\beta_1,w_1))$ has a closed lift
$((\alpha_2,v_2),\tau_2,(\beta_2,w_2))$ satisfying
$$i(\alpha_2,\beta_2) \leq n_1 - \frac{1}{2} |\mathcal{A}_1^{\text{g}}|.$$

Lemma \ref{lemma:random1} implies that there exists a linear map $\phi : \HH_1(\Sigma_1,\beta_1;\Field_2) \rightarrow \Field_2$ such that
$\Set{$\mu \in \mathcal{A}_1^{\text{g}}$}{$\phi([\mu]_{\beta_1}) = 1$}$ has cardinality at least
$\frac{1}{2} |\mathcal{A}_1^{\text{g}}|$.  Let $\psi : \HH_1(\Sigma_1;\Field_2) \rightarrow \Field_2$ be the composition of
$\phi$ with the natural map $\HH_1(\Sigma_1;\Field_2) \rightarrow \HH_1(\Sigma_1,\beta_1;\Field_2)$ and let
$(\Sigma_2,v_2) \rightarrow (\Sigma_1,v_1)$ be the regular $2$-fold cover
associated to $\psi$.  Since $\psi([\beta_1]) = 0$ and $[\alpha_1] = [\beta_1]$, the curve-arc triple
$((\alpha_1,v_1),\tau_1,(\beta_1,w_1))$ has a closed lift $((\alpha_2,v_2),\tau_2,(\beta_2,w_2))$ to $(\Sigma_2,v_2)$.

It remains to prove that $i(\alpha_2,\beta_2) \leq n_1 - \frac{1}{2} |\mathcal{A}_1^{\text{g}}|$.  Regarding
$S^1$ as the unit circle in $\C$, parametrize $\alpha_1$ via a continuous map $f_1 : (S^1,1) \rightarrow (\Sigma_1,v_1)$.
Define $I_1 = \Set{$\theta \in S^1$}{$f_1(\theta) \in \beta_1$}$.  Since $\alpha_1$ and $\beta_1$ are hyperbolic geodesics,
we have $n_1 = |I_1|$.  Lift $f_1$ to a map $f_2 : (S^1,1) \rightarrow (\Sigma_2,v_2)$ whose image is $\alpha_2$ and
define $I_2 = \Set{$\theta \in S^1$}{$f_2(\theta) \in \beta_2$}$.  Again, since $\alpha_2$ and $\beta_2$ are hyperbolic
geodesics, we must have $i(\alpha_2,\beta_2) = |I_2|$.  Also, by construction we have $I_2 \subset I_1$.  It is enough,
therefore, to prove that $|I_1 \setminus I_2| \geq \frac{1}{2} |\mathcal{A}_1^{\text{g}}|$.

To do this, it is enough to prove that 
\begin{equation}
\label{eqn:i1minusi2}
|I_1 \setminus I_2| \geq |\Set{$\mu \in \mathcal{A}_1^{\text{g}}$}{$\phi([\mu]_{\beta_1}) = 1$}|.
\end{equation}
Consider $\mu \in \mathcal{A}_1^{\text{g}}$ such that $\phi([\mu]_{\beta_1}) = 1$.  There exist $\theta,\theta' \in I_1$ such that
$\mu$ begins at $f_1(\theta)$ and ends at $f_1(\theta')$.  To prove \eqref{eqn:i1minusi2}, it
is enough to prove that at most one of $\theta$ and $\theta'$ lie in $I_2$.  Assume otherwise, so
$\theta, \theta' \in I_2$.  Let $\widetilde{\mu}$ be the oriented arc of $\alpha_2$ beginning at $f_2(\theta)$ and ending
at $f_2(\theta')$ and covering $\mu$.  Also, let $\widetilde{\eta}$ be one of the two oriented arcs of $\beta_2$ beginning at $f_2(\theta')$ and ending
at $f_2(\theta)$. Then $\widetilde{\eta}$ covers an oriented arc $\eta$
of $\beta_1$ beginning at $f_1(\theta')$ and ending at $f_1(\theta)$.  The
closed loop $\mu \cdot \eta$ on $\Sigma_1$ lifts to the closed loop $\widetilde{\mu} \cdot \widetilde{\eta}$ on $\Sigma_2$,
so $\psi([\mu \cdot \eta]) = 0$.  However, we also have
$$\psi([\mu \cdot \eta]) = \phi([\mu]_{\beta_1}) = 1,$$
a contradiction.
\end{proof}

\subsection{Lifting bad arcs to good arcs}
\label{section:basicmovemakegood}

We begin by clarifying the topological nature of bad arcs.

\begin{lemma}
\label{lemma:makegoodsep}
Let $\Sigma$ be a closed surface equipped with a hyperbolic metric and let $((\alpha,v),\tau,(\beta,w))$ be a curve-arc triple on $\Sigma$.  Assume
that $[\alpha] = [\beta] \neq 0$.  Also,
let $\mu$ be a bad $\beta$-arc of $\alpha$ which goes from $p_1 \in \alpha \cap \beta$ to $p_2 \in \alpha \cap \beta$.  
Then there exists an oriented arc $\eta$ of $\beta$ which goes from $p_2$ to $p_1$ with the following properties.
\begin{itemize}
\item The closed curve $\mu \cdot \eta$ is a separating simple closed curve which is not nullhomotopic.
\item The curve $\mu \cdot \eta$ is isotopic to a curve which is disjoint from $\beta$.
\end{itemize}
\end{lemma}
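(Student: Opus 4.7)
The plan is to produce $\eta$ in two steps: first use the bad-arc hypothesis to select one of the two arcs of $\beta$ from $p_2$ to $p_1$ via a homological argument, then verify the topological properties.

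Let $\eta,\eta'$ be the two oriented arcs of $\beta$ from $p_2$ to $p_1$, and note that $[\mu\cdot\eta]+[\mu\cdot\eta']=[\beta]$ in $\HH_1(\Sigma;\Field_2)$, since $\eta$ and $\eta'$ together traverse $\beta$. In the long exact sequence of the pair $(\Sigma,\beta)$ with $\Field_2$-coefficients, both $[\mu\cdot\eta]$ and $[\mu\cdot\eta']$ are sent to $[\mu]_\beta$ under the natural map $\HH_1(\Sigma;\Field_2)\to\HH_1(\Sigma,\beta;\Field_2)$. Since $[\mu]_\beta=0$ by the bad-arc hypothesis, both classes lie in the kernel of this map, which by exactness is the image of $\HH_1(\beta;\Field_2)\to\HH_1(\Sigma;\Field_2)$---the line spanned by $[\beta]\neq 0$. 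Combined with $[\mu\cdot\eta]+[\mu\cdot\eta']=[\beta]\neq 0$, this forces exactly one of the two classes to vanish. Take $\eta$ to be the oriented arc with $[\mu\cdot\eta]=0$.

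Next I would verify the three topological properties, assuming (as holds in the paper's applications) that the interior of $\mu$ is disjoint from $\beta$. Then $\mu$ and $\eta$ are each simple and meet only at $p_1,p_2$, so $\mu\cdot\eta$ is a simple closed curve after the corners are smoothed. It is separating, because a simple closed curve on a closed orientable surface separates if and only if it is $\Z$-nullhomologous, and $\Field_2$- and $\Z$-nullhomology agree for simple closed curves. It is not nullhomotopic, because otherwise it would bound an embedded disk whose boundary $\mu\cup\eta$ would exhibit a bigon between the geodesics $\alpha$ and $\beta$, contradicting the fact that hyperbolic geodesics are in minimal position \cite[Corollary 1.9]{FarbMargalitPrimer}.

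I expect the main technical point to be constructing the isotopy of $\mu\cdot\eta$ onto a curve disjoint from $\beta$. The natural move is to push $\eta$ off $\beta$ into a one-sided tubular neighborhood and smooth the corners at $p_1,p_2$. Let $s_i\in\{+,-\}$ denote the side of $\beta$ from which the interior of $\mu$ approaches $p_i$. If $s_1=s_2$, pushing $\eta$ to that common side produces a curve disjoint from $\beta$. The only potentially troublesome case is $s_1\neq s_2$: any choice of push then forces exactly one transverse intersection of the perturbed curve with $\beta$ (near whichever endpoint is on the opposite side), contributing $\pm 1$ to the algebraic intersection number. But $[\mu\cdot\eta]=0$ in $\HH_1(\Sigma;\Z)$ forces this algebraic intersection to be $0$, so the case $s_1\neq s_2$ cannot occur.
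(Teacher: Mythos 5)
Your proof is correct and follows essentially the same route as the paper's: the homological kernel argument to select $\eta$, the $\Field_2$-to-$\Z$ upgrade for separating, the bigon/minimal-position argument for non-nullhomotopy, and a local push-off analysis for disjointness from $\beta$. The only differences are cosmetic: the paper phrases the final step in terms of matching versus opposite \emph{signs} of intersection at $p_1$ and $p_2$ whereas you use \emph{sides} (equivalent bookkeeping), and you make explicit the implicit standing assumption that the interior of $\mu$ is disjoint from $\beta$, which is indeed how $\beta$-arcs are produced in the paper's applications.
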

\begin{proof}
There are two arcs $\eta$ and $\eta'$ of $\beta$ going from $p_2$ to $p_1$.  The elements $[\mu \cdot \eta],[\mu \cdot \eta'] \in \HH_1(\Sigma;\Field_2)$
both map to $[\mu]_{\beta} \in \HH_1(\Sigma,\beta;\Field_2)$, which vanishes by assumption.  It follows that $[\mu \cdot \eta]$ and $[\mu \cdot \eta']$
both lie in the kernel of the map $\HH_1(\Sigma;\Field_2) \rightarrow \HH_1(\Sigma,\beta;\Field_2)$, which consists of $0$ and $[\beta]$.  Letting
$\overline{\eta}'$ be $\eta'$ traversed in the opposite direction, we have 
$$[\mu \cdot \eta] - [\mu \cdot \eta'] = [\eta \cdot \overline{\eta}'] = [\beta].$$
We conclude that either $[\mu \cdot \eta]$ or $[\mu \cdot \eta']$ must be $0$; relabeling, we can assume that $[\mu \cdot \eta]=0$.  
The curve $\mu \cdot \eta$ is a simple closed curve which is nullhomologous over $\Field_2$.  It
is not hard to see that this implies that $\mu \cdot \eta$ is in fact nullhomologous over $\Z$, 
i.e.\ that $\mu \cdot \eta$ is a separating simple closed curve (the key point here being
that $\mu \cdot \eta$ is simple).  If
$\mu \cdot \eta$ were nullhomotopic, then it would bound a disc.  This would imply that $\alpha$ and $\beta$ could be homotoped
so as to intersect fewer times, which is impossible since they are hyperbolic geodesics.

It remains to prove that $\mu \cdot \eta$ is isotopic to a curve which is disjoint from $\beta$.  The algebraic intersection number
of $\mu \cdot \eta$ and $\beta$ is $0$ since $\mu \cdot \eta$ is nullhomologous.  If the signs of the intersections of $\alpha$ and $\beta$ 
at $p_1$ and $p_2$ were the same, then $\mu \cdot \eta$ would have algebraic intersection number $1$ with $\beta$
(see Figure \ref{figure:samesignint}.a),
so those signs must be opposite.  The desired result now follows from
Figure \ref{figure:samesignint}.b.
\end{proof}

\Figure{figure:samesignint}{samesignint}{
a. On the left is a local picture of $\mu$ and $\beta$ when the signs of the intersections
of $\alpha$ and $\beta$ are the same at $p_1$ and $p_2$.  On the right is shown an 
isotopy of $\mu \cdot \eta$ indicating the
algebraic intersection number is $1$ between $\mu \cdot \eta$ and $\beta$.
\CaptionSpace
b. On the left is a local picture of $\mu$ and $\beta$ when the signs of the intersections of $\alpha$
and $\beta$ are different at $p_1$ and $p_2$.  On the right is shown an isotopy of $\mu \cdot \eta$
indicating that $\mu \cdot \eta$ can be made disjoint from $\beta$.}

\noindent
To ``resolve'' bad arcs into good arcs, we will need to lift separating curves to nonseparating curves
as in the following lemma.  Note that if $\delta$ is a simple closed separating curve on $\Sigma$
which divides $\Sigma$ into subsurfaces $S$ and $S'$, then $\HH_1(\Sigma;\Field_2) = \HH_1(S;\Field_2) \oplus \HH_1(S';\Field_2)$.

\begin{lemma}
\label{lemma:badarcresolve}
Let $\Sigma$ be a closed surface and let $\delta$ be a simple closed separating curve on $\Sigma$ which divides $\Sigma$
into subsurfaces $S$ and $S'$.  Let $\phi : \HH_1(\Sigma;\Field_2) \rightarrow \Field_2$ be a linear map such that
$\phi|_{\HH_1(S;\Field_2)} \neq 0$ and $\phi|_{\HH_1(S';\Field_2)} \neq 0$.  Let $\widetilde{\Sigma} \rightarrow \Sigma$ be
the regular $2$-fold cover associated to $\phi$.  Finally, let $\widetilde{\delta}$ be a component of the preimage of $\delta$
in $\widetilde{\Sigma}$.  Then $[\widetilde{\delta}] \neq 0$.
\end{lemma}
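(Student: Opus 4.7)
\medskip

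The plan is to show that $\widetilde{\delta}$ does not separate $\widetilde{\Sigma}$, which (since $\widetilde{\delta}$ is a simple closed curve in a closed orientable surface) is equivalent to $[\widetilde{\delta}] \neq 0$ in $\HH_1(\widetilde{\Sigma};\Field_2)$. To carry this out, I first want to understand the preimages of $\delta$, $S$, and $S'$ in $\widetilde{\Sigma}$.

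First, since $\delta$ is separating we have $[\delta] = 0$, so $\phi([\delta]) = 0$ and the curve $\delta$ lifts to $\widetilde{\Sigma}$. Because $\widetilde{\Sigma} \to \Sigma$ is a regular degree $2$ cover and $\delta$ lifts, the preimage of $\delta$ consists of two disjoint simple closed curves $\widetilde{\delta}_1$ and $\widetilde{\delta}_2$, each mapping homeomorphically to $\delta$ and exchanged by the deck involution. We may assume $\widetilde{\delta} = \widetilde{\delta}_1$. Next, the cover $\widetilde{\Sigma} \to \Sigma$ restricts to a cover of $S$, and this restricted cover is classified by $\phi$ composed with $\HH_1(S;\Field_2) \to \HH_1(\Sigma;\Field_2)$; since $\phi|_{\HH_1(S;\Field_2)} \neq 0$ by hypothesis, the preimage $\widetilde{S}$ of $S$ in $\widetilde{\Sigma}$ is connected, and by the same argument so is the preimage $\widetilde{S'}$ of $S'$.

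Now $\widetilde{\Sigma} = \widetilde{S} \cup \widetilde{S'}$ with $\widetilde{S} \cap \widetilde{S'} = \widetilde{\delta}_1 \sqcup \widetilde{\delta}_2$, where both $\widetilde{S}$ and $\widetilde{S'}$ are connected surfaces with boundary $\widetilde{\delta}_1 \sqcup \widetilde{\delta}_2$. To see that $\widetilde{\delta}_1$ does not separate $\widetilde{\Sigma}$, I would observe that removing $\widetilde{\delta}_1$ from $\widetilde{S}$ (resp.\ $\widetilde{S'}$) leaves a connected surface, since a boundary component of a connected surface with boundary never separates it. These two connected pieces are then glued to each other along the remaining curve $\widetilde{\delta}_2$, so $\widetilde{\Sigma} \setminus \widetilde{\delta}_1$ is connected. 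Hence $\widetilde{\delta}_1$ does not separate, and $[\widetilde{\delta}_1] \neq 0$ in $\HH_1(\widetilde{\Sigma};\Field_2)$.

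The only step that requires any real care is verifying that $\widetilde{S}$ and $\widetilde{S'}$ are connected, which reduces to the standard fact that a cover of a connected space classified by a nonzero homomorphism to $\Field_2$ is itself connected; after that, the separating/non-separating dichotomy and a short cut-and-paste argument finish the proof. There is no combinatorial or probabilistic subtlety here — the lemma is essentially a structural observation about how the separating curve $\delta$ behaves in the specific cover $\widetilde{\Sigma}$.
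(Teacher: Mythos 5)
Your proof is correct and follows essentially the same approach as the paper: decompose $\widetilde{\Sigma}$ into the preimages $\widetilde{S}$ and $\widetilde{S}'$, use $\phi|_{\HH_1(S;\Field_2)}\neq 0$ and $\phi|_{\HH_1(S';\Field_2)}\neq 0$ to get connectedness of each piece, observe the preimage of $\delta$ has two components since $\phi([\delta])=0$, and conclude that removing one of them leaves $\widetilde{\Sigma}$ connected. Your final cut-and-paste step makes explicit what the paper leaves to the reader.
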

\begin{proof}
Let $\widetilde{S}$ and $\widetilde{S}'$ be the preimages in $\widetilde{\Sigma}$ of $S$ and $S'$, respectively.  The maps
$\widetilde{S} \rightarrow S$ and $\widetilde{S}' \rightarrow S'$ are $2$-fold covering maps.  Moreover (and this is the key
observation), the assumption that $\phi|_{\HH_1(S;\Field_2)} \neq 0$ and $\phi|_{\HH_1(S';\Field_2)} \neq 0$ implies that both $\widetilde{S}$
and $\widetilde{S}'$ are connected.  Since $\delta$ is a separating curve, we have $[\delta] = 0$ and thus $\phi([\delta])=0$, 
which implies that the preimage of $\delta$ in $\widetilde{\Sigma}$ has two components.  These two components are the boundary
components of $\widetilde{S}$ and $\widetilde{S}'$, so we deduce that both $\widetilde{S}$ and $\widetilde{S}'$ have two boundary
components.  The surface $\widetilde{\Sigma}$ is obtained by gluing the boundary components of $\widetilde{S}$ to the boundary
components of $\widetilde{S}'$.  Consequently, $\widetilde{\delta}$ (which is one of those boundary components) does not
separate $\widetilde{\Sigma}$, so $[\widetilde{\delta}] \neq 0$.
\end{proof}


\noindent
We thus need to construct linear maps $\HH_1(\Sigma;\Field_2) \rightarrow \Field_2$ which are nontrivial on a large number
of splittings of $\HH_1(\Sigma;\Field_2)$.  We will do this with the following lemma, which
plays the same role in the proof of Lemma \ref{lemma:basicmovemakegood} that Lemma \ref{lemma:random1}
played in the proof of Lemma \ref{lemma:basicmoveresolveisect}.

\begin{lemma}
\label{lemma:random2}
Fix some $n \geq 3$.  For $1 \leq i \leq m$, let $V_i,W_i \subset \Field_2^n$ be nontrivial subspaces such that $\Field_2^n = V_i \oplus W_i$.
Then there exists a linear map $f : \Field_2^n \rightarrow \Field_2$ such that
$\Set{$i$}{$1 \leq i \leq m$, $f|_{V_i} \neq 0$, $f|_{W_i} \neq 0$}$ has cardinality at least $\frac{3}{7} m$.
\end{lemma}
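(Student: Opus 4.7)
My plan is to mimic the probabilistic argument used for Lemma \ref{lemma:random1}, but this time working on the sample space $\Omega = (\Field_2^n)^* \setminus \{0\}$ of \emph{nonzero} linear functionals, each given equal probability. For each $i$, let $\mathcal{X}_i : \Omega \rightarrow \R$ be the indicator random variable that equals $1$ when $f|_{V_i} \neq 0$ and $f|_{W_i} \neq 0$, and $0$ otherwise. By linearity of expectation, it suffices to show that $E(\mathcal{X}_i) \geq 3/7$ for every $i$; then $E(\sum_i \mathcal{X}_i) \geq 3m/7$, which forces the existence of some $f \in \Omega$ satisfying the conclusion of the lemma.

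To compute $E(\mathcal{X}_i)$, set $d_i = \dim V_i$, so $\dim W_i = n - d_i$ and $1 \leq d_i \leq n-1$. Among all $2^n$ elements of $(\Field_2^n)^*$, exactly $2^{n - d_i}$ vanish on $V_i$ and exactly $2^{d_i}$ vanish on $W_i$; since $V_i + W_i = \Field_2^n$, the zero functional is the only one that vanishes on both. Inclusion–exclusion counts the functionals which restrict nontrivially to both subspaces, and all such functionals are automatically nonzero, so
$$E(\mathcal{X}_i) = \frac{2^n - 2^{n-d_i} - 2^{d_i} + 1}{2^n - 1}.$$

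The remaining task is a purely arithmetic check: this ratio is at least $3/7$ whenever $n \geq 3$ and $1 \leq d_i \leq n-1$. The numerator is symmetric in $d_i \leftrightarrow n - d_i$, and because $2^{d_i} + 2^{n - d_i}$ is convex in $d_i$, its maximum over the integer interval $[1, n-1]$ is attained at the endpoints $d_i = 1$ and $d_i = n-1$. It therefore suffices to check the endpoint case, where the ratio equals $(2^{n-1} - 1)/(2^n - 1)$; this quantity is increasing in $n$ and equals exactly $3/7$ when $n = 3$. This is precisely where the hypothesis $n \geq 3$ enters.

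The main subtlety — minor but essential — is the need to restrict to nonzero functionals. If one averages over all of $(\Field_2^n)^*$ (including the zero functional), the worst-case probability is only $3/8 < 3/7$; discarding the zero functional rescales by $2^n/(2^n - 1)$ and supplies exactly the extra slack needed to reach $3/7$ in the tight case $n = 3$, $d_i = 1$. Aside from this bookkeeping, the argument is a direct parallel of Lemma \ref{lemma:random1}, and once the probability bound is in place linearity of expectation completes the proof.
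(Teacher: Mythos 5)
Your proof is correct and takes essentially the same route as the paper: averaging the indicator over the uniform distribution on nonzero functionals, using linearity of expectation, and reducing to the arithmetic bound $(2^a-1)(2^{n-a}-1)/(2^n-1) \geq 3/7$ with the minimum at $a=1$, $n=3$. Your inclusion–exclusion count $2^n - 2^{d_i} - 2^{n-d_i} + 1$ is just an expanded form of the paper's product $(2^a-1)(2^{n-a}-1)$, and your explicit convexity justification for the endpoint claim fills in what the paper's Lemma~\ref{lemma:calc} asserts as ``easy to see.''
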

\begin{proof}
Let $\Omega$ be the probability space consisting of all {\em nonzero} linear maps $\Field_2^n \rightarrow \Field_2$, each given
equal probability.  Let $\mathcal{X} : \Omega \rightarrow \R$ be the random variable that takes $f \in \Omega$ to the
cardinality of the set $\Set{$i$}{$1 \leq i \leq m$, $f|_{V_i} \neq 0$, $f|_{W_i} \neq 0$}$.  
We will prove that the expected value $E(\mathcal{X})$ of $\mathcal{X}$
is at least $\frac{3}{7} m$, which clearly implies that there exists {\em some} element $f \in \Omega$ such that $\mathcal{X}(f) \geq \frac{3}{7} m$.

To prove the desired claim, for $1 \leq i \leq m$ let $\mathcal{X}_i : \Omega \rightarrow \R$ be the random variable that
takes $f \in \Omega$ to $1$ if $f|_{V_i},f|_{W_i} \neq 0$ and to $0$ if $f|_{V_i} = 0$ or  $f|_{W_i} = 0$.  Using linearity of expectation,
we have
$$E\left(\mathcal{X}\right) = E\left(\sum_{i=1}^m \mathcal{X}_i\right) = \sum_{i=1}^m E\left(\mathcal{X}_i\right).$$
Fixing some $1 \leq i \leq m$, it is therefore enough to prove that $E(\mathcal{X}_i) \geq 3/7$, i.e.\ 
to show that the probability that a random nonzero linear map $f : \Field_2^n \rightarrow \Field_2$ satisfies 
$f|_{V_i},f|_{W_i} \neq 0$ is at least $3/7$.

Set $a = \dim(V_i)$, so $1 \leq a \leq n-1$ and $\dim(W_i) = n-a$.  There are $2^a-1$ (resp.\ $2^{n-a}-1$) nonzero
linear maps $V_i \rightarrow \Field_2$ (resp.\ $W_i \rightarrow \Field_2$).  This implies that there are $(2^a-1)(2^{n-a}-1)$ linear
maps $f : \Field_2^n \rightarrow \Field_2$ such that $f|_{V_i},f|_{W_i} \neq 0$.  Since there are $2^n-1$ nonzero linear
maps $\Field_2^n \rightarrow \Field_2$, we deduce that the probability that a random nonzero linear map $f : \Field_2^n \rightarrow \Field_2$ satisfies 
$f|_{V_i},f|_{W_i} \neq 0$ is $\frac{(2^a-1)(2^{n-a}-1)}{2^n-1}$.  Lemma \ref{lemma:calc} below says that this is at least $3/7$, as desired.
\end{proof}

\begin{lemma}
\label{lemma:calc}
Set $D = \Set{$(a,n)$}{$n \geq 3$ and $1 \leq a \leq n-1$}$, and define $\zeta : D \rightarrow \R$ via the formula
$\zeta(a,n) = \frac{(2^a-1)(2^{n-a}-1)}{2^n-1}$.  Then $\zeta$ is bounded below by $\zeta(1,3) = 3/7$.
\end{lemma}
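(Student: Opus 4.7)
The plan is to minimize $\zeta$ in two stages: first over $a$ for fixed $n$, then over $n$.

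First I would note the symmetry $\zeta(a,n)=\zeta(n-a,n)$, so it suffices to consider $1 \leq a \leq n/2$. Expanding the numerator gives
$$(2^a-1)(2^{n-a}-1) = 2^n + 1 - (2^a + 2^{n-a}),$$
so for fixed $n \geq 3$ the quantity $\zeta(a,n)$ is minimized exactly when $2^a + 2^{n-a}$ is maximized over $1 \leq a \leq n-1$. Since the function $a \mapsto 2^a + 2^{n-a}$ is strictly convex on $[1,n-1]$, it attains its maximum at the endpoints $a = 1$ and $a = n-1$. Hence the minimum of $\zeta(\cdot,n)$ over $a$ is $\zeta(1,n)$.

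Next I would study $\zeta(1,n)$ as a function of $n \geq 3$. A direct calculation gives
$$\zeta(1,n) = \frac{2^{n-1}-1}{2^n-1} = \frac{1}{2} \cdot \frac{2^n-2}{2^n-1} = \frac{1}{2}\left(1 - \frac{1}{2^n-1}\right).$$
The right-hand side is clearly increasing in $n$, and so its minimum over $n \geq 3$ occurs at $n=3$, giving $\zeta(1,3) = \frac{3}{7}$. Combining the two reductions yields $\zeta(a,n) \geq 3/7$ for all $(a,n) \in D$, with equality precisely at $(1,3)$ and $(2,3)$.

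This is an entirely elementary calculus-on-integers argument, so there is no real obstacle; the only thing to be careful about is ensuring the reduction step (convexity of $2^a + 2^{n-a}$) is phrased so that the endpoint $a=1$ is actually allowed, which it is by the definition of $D$.
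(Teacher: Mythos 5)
Your proof is correct and follows the same two-stage strategy as the paper's: minimize over $a$ for fixed $n$ (reducing to the endpoints $a=1$, $a=n-1$), then minimize $\zeta(1,n)$ over $n \geq 3$. You have simply filled in the details that the paper dismisses with ``it is easy to see,'' via the convexity of $a \mapsto 2^a + 2^{n-a}$ and the explicit monotone formula for $\zeta(1,n)$.
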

\begin{proof}
For a fixed $n_0 \geq 3$, it is easy to see that the function $a \mapsto \zeta(a,n_0)$ defined on the domain $1 \leq a \leq n_0-1$
attains its global minima at the boundary points $a=1$ and $a=n_0-1$.  Moreover, $\zeta(1,n_0) = \zeta(n_0-1,n_0)$.  Finally,
the function $n \mapsto \zeta(1,n)$ defined on the domain $n \geq 3$ achieves its global minimum at the point $n=3$.  The lemma follows.
\end{proof}

\noindent
We are finally in a position to prove Lemma \ref{lemma:basicmovemakegood}.

\begin{proof}[{Proof of Lemma \ref{lemma:basicmovemakegood}}]
We begin by recalling the setup.  We are given a closed surface $\Sigma_0$ equipped with a hyperbolic
metric and a curve-arc triple $((\alpha_0,v_0),\tau_0,(\beta_0,w_0))$ on $\Sigma_0$ such that
$[\alpha_0] = [\beta_0] \neq 0$.  We are also given a set $\mathcal{A}_0$ of disjoint $\beta_0$-arcs of $\alpha_0$.
Our goal is to construct a regular degree $2$ cover 
$(\Sigma_1,v_1) \rightarrow (\Sigma_0,v_0)$ such that $((\alpha_0,v_0),\tau_0,(\beta_0,w_0))$ has a closed lift
$((\alpha_1,v_1),\tau_1,(\beta_1,w_1))$ and there exists a set 
$\mathcal{A}_1$ of disjoint $\beta_1$-arcs of $\alpha_1$ and some $r \geq 0$ satisfying
$$|\mathcal{A}_1^{\text{g}}| \geq |\mathcal{A}_0^{\text{g}}| + \frac{3}{7} |\mathcal{A}_0^{\text{b}}| - r \quad \text{and} \quad n_1 \leq n_0 - r,$$
where $n_1 = i(\alpha_1,\beta_1)$.

Write $\mathcal{A}_0^{\text{b}} = \{\mu_1,\ldots,\mu_{|\mathcal{A}_0^{\text{b}}|}\}$.  Using Lemma \ref{lemma:makegoodsep}, we
obtain oriented arcs $\eta_1,\ldots,\eta_{|\mathcal{A}_0^{\text{b}}|}$ of $\beta_0$ such that the following hold for $1 \leq i \leq |\mathcal{A}_0^{\text{b}}|$.
\begin{itemize}
\item The arc $\eta_i$ begins at the endpoint of $\mu_i$ and ends at the beginning point of $\mu_i$.
\item The curve $\mu_i \cdot \eta_i$ is a simple closed separating curve isotopic to a curve disjoint from $\beta_0$.
\end{itemize}
For $1 \leq i \leq |\mathcal{A}_0^{\text{b}}|$, let $S_i$ and $S_i'$ be the two subsurfaces into which $\Sigma_0$ is divided by $\mu_i \cdot \eta_i$, ordered
so that $\beta_0$ is isotopic into $S_i'$.  We thus have $\HH_1(\Sigma_0;\Field_2) = \HH_1(S_i;\Field_2) \oplus \HH_1(S_i';\Field_2)$ and
$[\beta_0] \in \HH_1(S_i';\Field_2)$.  Let $X$ be the quotient of $\HH_1(\Sigma_0;\Field_2)$ by 
the span of $[\beta_0]$, so $X \cong \HH_1(\Sigma_0,\beta;\Field_2)$.  Also,
let $V_i$ and $W_i$ be the projections to $X$ of $\HH_1(S_i;\Field_2)$ and $\HH_1(S_i';\Field_2)$, respectively.  Thus $X = V_i \oplus W_i$.
Lemma \ref{lemma:random2} implies that there exists a linear map $\phi : X \rightarrow \Field_2$ such that the set
$\Set{$i$}{$1 \leq i \leq |\mathcal{A}_0^{\text{b}}|$, $\phi|_{V_i} \neq 0$, $\phi|_{W_i} \neq 0$}$ has cardinality at least
$\frac{3}{7} |\mathcal{A}_0^{\text{b}}|$.  

Let $\psi : \HH_1(\Sigma_0;\Field_2) \rightarrow \Field_2$ be the composition of $\phi$ with the
projection
$\HH_1(\Sigma_0;\Field_2) \rightarrow X$ and let $(\Sigma_1,v_1) \rightarrow (\Sigma_0,v_0)$ be the regular
$2$-fold cover associated to $\psi$.  Since $\psi([\beta_0]) = 0$ and $[\alpha_0] = [\beta_0]$, the
curve-arc triple $((\alpha_0,v_0),\tau_0,(\beta_0,w_0))$ has a closed lift $((\alpha_1,v_1),\tau_1,(\beta_1,w_1))$
to $(\Sigma_1,v_1)$.  Define $n_1 = i(\alpha_1, \beta_1)$.
Each arc in $\mathcal{A}_0$ lifts to an arc of $\alpha_1$; let $\mathcal{B}_1$ be the
set of these arcs.  Not all of the elements of $\mathcal{B}_1$ are $\beta_1$-arcs.  Let
$\mathcal{A}_1 = \Set{$\mu \in \mathcal{B}_1$}{$\mu$ is a $\beta_1$-arc}$.  Finally, define $r = |\mathcal{B}_1 \setminus \mathcal{A}_1|$.

We have constructed all of the objects claimed by the lemma.  It remains to verify the inequalities from its
conclusion.  Regarding $S^1$ as the unit circle in $\C$, parametrize $\alpha_0$ via a continuous map
$f_0 : (S^1,1) \rightarrow (\Sigma_0,v_0)$.  Define $I_0 = \Set{$\theta \in S^1$}{$f_0(\theta) \in \beta_0$}$.  Since
$\alpha_0$ and $\beta_0$ are hyperbolic geodesics, we have $n_0 = |I_0|$.  Lift $f_0$ to a map $f_1 : (S^1,1) \rightarrow (\Sigma_1,v_1)$
whose image is $\alpha_1$ and define $I_1 = \Set{$\theta \in S^1$}{$f_1(\theta) \in \beta_1$}$.  Again, since $\alpha_1$
and $\beta_1$ are hyperbolic geodesics, we must have $n_1 = |I_1|$.  Also, by construction we have $I_1 \subset I_0$.  The
desired inequality $n_1 \leq n_0 - r$ is thus equivalent to the assertion that $|I_0 \setminus I_1| \geq r$.  This
follows immediately from the fact that the elements of $\mathcal{B}_1 \setminus \mathcal{A}_1$ are exactly the arcs
of $\mathcal{B}_1$ at least one of whose endpoints is $f_1(\theta)$ for some $\theta \in I_0 \setminus I_1$.

All that is left to do is to prove that 
\begin{equation}
\label{eqn:lastthing}
|\mathcal{A}_1^{\text{g}}| \geq |\mathcal{A}_0^{\text{g}}| + \frac{3}{7} |\mathcal{A}_0^{\text{b}}| - r.
\end{equation}
We first define $\mathcal{B}_1^{\text{g}}$ and $\mathcal{B}_1^{\text{b}}$ to be the subsets of $\mathcal{B}_1$ that
are lifts of elements of $\mathcal{A}_0^{\text{g}}$ and $\mathcal{A}_0^{\text{b}}$, respectively.  Next, define
$r_{\text{g}} = |\mathcal{B}_1^{\text{g}} \setminus \mathcal{A}_1|$ and $r_{\text{b}} = |\mathcal{B}_1^{\text{b}} \setminus \mathcal{A}_1|$,
so $r = r_{\text{g}} + r_{\text{b}}$.  We have
\begin{equation}
\label{eqn:last1}
|\mathcal{A}_1^{\text{g}}| = |\mathcal{B}_1^{\text{g}} \cap \mathcal{A}_1^{\text{g}}| + |\mathcal{B}_1^{\text{b}} \cap \mathcal{A}_1^{\text{g}}|.
\end{equation}
Moreover, it is clear from the definitions that $\mathcal{B}_1^{\text{g}} \cap \mathcal{A}_1^{\text{g}} = \mathcal{B}_1^{\text{g}} \cap \mathcal{A}_1$, so
\begin{equation}
\label{eqn:last2}
|\mathcal{B}_1^{\text{g}} \cap \mathcal{A}_1^{\text{g}}| = |\mathcal{B}_1^{\text{g}} \cap \mathcal{A}_1| = |\mathcal{A}_0^{\text{g}}| - r_{\text{g}}.
\end{equation}
Combining \eqref{eqn:last1} and \eqref{eqn:last2}, we see that \eqref{eqn:lastthing} is equivalent to the assertion that
\begin{equation}
\label{eqn:last3}
|\mathcal{B}_1^{\text{b}} \cap \mathcal{A}_1^{\text{g}}| \geq \frac{3}{7} |\mathcal{A}_0^{\text{b}}| - r_{\text{b}},
\end{equation}
which we will now prove.

Recall that we have enumerated $\mathcal{A}_0^{\text{b}}$ as $\{\mu_1,\ldots,\mu_{|\mathcal{A}_0^{\text{b}}|}\}$.  For $1 \leq i \leq |\mathcal{A}_0^{\text{b}}|$, 
let the lift
of $\mu_i$ to $\alpha_1$ be $\widetilde{\mu}_i \in \mathcal{B}_1^{\text{b}}$.  We know that exactly $r_{\text{b}}$ elements
of $\{\widetilde{\mu}_1,\ldots,\widetilde{\mu}_{|\mathcal{A}_0^{\text{b}}|}\}$ do {\em not} lie in $\mathcal{A}_1$.  Also, we know that
the set $\Set{$i$}{$1 \leq i \leq |\mathcal{A}_0^{\text{b}}|$, $\phi|_{V_i} \neq 0$, $\phi|_{W_i} \neq 0$}$ has cardinality
at least $\frac{3}{7} |\mathcal{A}_0^{\text{b}}|$.  We conclude that the set
$$\mathcal{C} := \Set{$\widetilde{\mu}_i$}{$1 \leq i \leq |\mathcal{A}_0^{\text{b}}|$, $\phi|_{V_i} \neq 0$, $\phi|_{W_i} \neq 0$, $\widetilde{\mu}_i \in \mathcal{A}_1$} \subset \mathcal{B}_1^{\text{b}}$$
has cardinality at least $\frac{3}{7} |\mathcal{A}_0^{\text{b}}| - r_{\text{b}}$.  To prove \eqref{eqn:last3}, therefore,
it is enough to prove that $\mathcal{C} \subset \mathcal{A}_1^{\text{g}}$.

Consider some $\widetilde{\mu}_i \in \mathcal{C}$.  The endpoints of $\widetilde{\mu}_i$ lie in $\beta_1$ by construction. Let $\widetilde{\eta}_i$ be the lift
of $\eta_i$ to $\beta_1$ where $\eta_i$ is the curve constructed in the second paragraph.  Lemma \ref{lemma:badarcresolve} 
implies that $[\widetilde{\mu}_i \cdot \widetilde{\eta}_i] \neq 0$.  Also,
we know that $[\widetilde{\mu}_i \cdot \widetilde{\eta}_i]$ projects to $[\mu_i \cdot \eta_i] = 0$ in $\HH_1(\Sigma_0;\Field_2)$, and
$[\beta_1]$ projects to $[\beta_0] \neq 0$.  We deduce that $[\widetilde{\mu}_i \cdot \widetilde{\eta}_i] \neq [\beta_1]$.
Since
$[\widetilde{\mu}_i]_{\beta_1}$ is the projection of $[\widetilde{\mu}_i \cdot \widetilde{\eta}_i]$ to $\HH_1(\Sigma_1,\beta_1;\Field_2)$,
we conclude that $[\widetilde{\mu}_i]_{\beta_1} \neq 0$, i.e.\ that $\widetilde{\mu}_i$ is a good $\beta_1$-arc of $\alpha_1$ and thus
an element of $\mathcal{A}_1^{\text{g}}$, as desired.
\end{proof}

\appendix

\section{Appendix : Surfaces with boundary} 
\label{appendix:nonclosed}

In this appendix, we prove the following theorem.  It generalizes Theorem \ref{maintheorem:isect}
to surfaces with boundary.

\begin{maintheorem} 
\label{theorem:isectbdry}
Let $\Sigma$ be a compact orientable surface of genus $g$ with $b \geq 1$ boundary
components.  Assume that $2g+b \geq 3$.
Let $\alpha$ and $\beta$ be nonisotopic oriented simple closed curves on $\Sigma$.  Assume
that for some $d \geq 7$ the conjugacy classes in $\pi_1(\Sigma) / \gamma_d(\pi_1(\Sigma))$
associated to $\alpha$ and $\beta$ are the same.  Then
$$i(\alpha,\beta) \geq \left(\frac{d+2}{2}\right)^c,$$
where $c = \frac{\ln(28/25)}{\ln(4)}$.
\end{maintheorem}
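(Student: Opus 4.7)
The plan is to reduce Theorem \ref{theorem:isectbdry} to the closed-surface case (Theorem \ref{maintheorem:isect}) via an embedding of $\Sigma$ as an incompressible subsurface of a closed surface $\widehat{\Sigma}$ of genus at least $2$. Since $2g+b \geq 3$ and $b \geq 1$, the natural choice is the double $\widehat{\Sigma} = \Sigma \cup_{\partial} \Sigma$, which has genus $2g + b - 1 \geq 2$; by Van Kampen's theorem the inclusion $\Sigma \hookrightarrow \widehat{\Sigma}$ induces an injection $\pi_1(\Sigma) \hookrightarrow \pi_1(\widehat{\Sigma})$. An alternative is to cap off each boundary component by a surface of positive genus with one boundary component, again producing a closed $\widehat{\Sigma}$ of genus at least $2$ into which $\Sigma$ embeds $\pi_1$-injectively.

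Two of the hypotheses of Theorem \ref{maintheorem:isect} transfer immediately. The injectivity of $\pi_1(\Sigma) \hookrightarrow \pi_1(\widehat{\Sigma})$ promotes the equality of conjugacy classes modulo $\gamma_d$ from $\pi_1(\Sigma)$ to $\pi_1(\widehat{\Sigma})$, and any isotopy of $\alpha$ or $\beta$ inside $\Sigma$ is also an isotopy inside $\widehat{\Sigma}$, so $i_{\widehat{\Sigma}}(\alpha,\beta) \leq i_\Sigma(\alpha,\beta)$; a lower bound on the former therefore yields the desired lower bound on the latter.

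The main obstacle is ensuring that $\alpha$ and $\beta$ remain nonisotopic in $\widehat{\Sigma}$. Since simple closed curves on a surface are isotopic if and only if conjugate in $\pi_1$, the task is to rule out conjugacy of $\alpha, \beta$ in $\pi_1(\widehat{\Sigma})$. By Bass--Serre theory applied to the amalgamated product structure on $\pi_1(\widehat{\Sigma})$, or equivalently by the standard fact that incompressible subsurfaces have ``malnormal-up-to-boundary'' $\pi_1$, any conjugator between elements of $\pi_1(\Sigma)$ can be reduced via the normal form to lie in $\pi_1(\Sigma)$ unless at least one of $\alpha, \beta$ is peripheral in $\Sigma$. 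In the nonperipheral case, nonisotopy in $\widehat{\Sigma}$ is inherited from nonisotopy in $\Sigma$, and Theorem \ref{maintheorem:isect} applies with $d \geq 7 \geq 3$ to conclude. The peripheral case is handled separately: either by choosing the embedding so that the various boundary-parallel conjugacy classes of $\Sigma$ remain pairwise distinct in $\pi_1(\widehat{\Sigma})$ (for example by capping with surfaces of distinct genera) and reducing again to Theorem \ref{maintheorem:isect}, or by treating peripheral curves directly using the strengthened hypothesis $d \geq 7$. This hypothesis, stronger than the $d \geq 3$ of the closed case, presumably also absorbs the cost of re-establishing the initial $n \geq 2$ reduction of Section \ref{section:reduction} in the boundary setting, where the closed-surface results of Kawazumi--Kuno, Johnson, and Bestvina--Bux--Margalit must be reinterpreted. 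The careful analysis of peripheral curves is the hard part; once it is carried out, the rest of the argument is essentially mechanical.
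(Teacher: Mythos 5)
Your overall strategy---embed $\Sigma$ into a closed surface of genus at least $2$ via the double and invoke Theorem~\ref{maintheorem:isect}---is exactly the paper's approach, but you missed the one-line observation that makes the nonisotopy step trivial, and as a result you've left a genuine hole. The double $\widehat{\Sigma}$ admits a \emph{retraction} $r : \widehat{\Sigma} \to \Sigma$ (collapse the second copy onto the first via the identity on points). If $f(\alpha)$ and $f(\beta)$ were isotopic in $\widehat{\Sigma}$, applying $r$ shows $\alpha$ and $\beta$ are freely homotopic in $\Sigma$, hence isotopic (homotopic simple closed curves on a surface are isotopic). That is the whole argument; no Bass--Serre theory, no normal forms, and---crucially---no peripheral/nonperipheral dichotomy. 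Your proposal instead appeals to a somewhat imprecise ``malnormal-up-to-boundary'' statement and then explicitly defers the peripheral case, calling it ``the hard part'' and offering only a sketch; so as written the argument is incomplete precisely where the retraction trick would have made it effortless. Two smaller points: the transfer of the conjugacy condition mod $\gamma_d$ uses only functoriality ($f_*(\gamma_d(\pi_1\Sigma)) \subseteq \gamma_d(\pi_1\widehat{\Sigma})$), not $\pi_1$-injectivity; and your speculation that $d \geq 7$ ``absorbs the cost of re-establishing the $n \geq 2$ reduction in the boundary setting'' is off target, since the reduction happens entirely inside the closed surface where Theorem~\ref{maintheorem:isect} already applies for all $d \geq 3$.
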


\begin{remark}
In the exceptional cases where $2g+b < 3$, the theorem would be vacuous.
\end{remark}

\noindent
We begin with an elementary lemma.

\begin{lemma} \label{lemma:embedinclosed}
Let $\Sigma$ be an orientable compact surface of genus $g \geq 0$ with $b \geq 1$ boundary components.
Assume that $2g+b \geq 3$.
Let $\alpha$ and $\beta$ be nonisotopic simple closed curves on $\Sigma$.
Then there is a closed surface $\Sigma'$ of genus at least $2$ and an embedding 
$f: \Sigma \hookrightarrow \Sigma'$ such that $f(\alpha)$ and $f(\beta)$ are nonisotopic on $\Sigma'$.
\end{lemma}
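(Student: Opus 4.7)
My approach would be to construct $\Sigma'$ as the double of $\Sigma$ along its boundary. That is, I would set $\Sigma' = \Sigma \cup_{\partial\Sigma} \overline{\Sigma}$, where $\overline{\Sigma}$ denotes a second copy of $\Sigma$ with reversed orientation, glued to $\Sigma$ via the identity of the boundary. The embedding $f : \Sigma \hookrightarrow \Sigma'$ is then the inclusion of $\Sigma$ as one half of the double. An Euler characteristic computation gives $\chi(\Sigma') = 2\chi(\Sigma) = 2(2-2g-b)$, so the genus of $\Sigma'$ is $g' = 2g + b - 1$, and the hypothesis $2g + b \geq 3$ immediately yields $g' \geq 2$.

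The core of the argument is showing that $f(\alpha)$ and $f(\beta)$ remain nonisotopic in $\Sigma'$. The foundation is the fact that $\Sigma$ is incompressible in $\Sigma'$ (since the complementary copy $\overline{\Sigma}$ contains no disk with essential boundary in $\partial\Sigma$), so that $f$ is $\pi_1$-injective and each component of $\partial\Sigma$ remains an essential simple closed curve in $\Sigma'$. I would then break into cases depending on whether $\alpha$ and $\beta$ are essential (meaning non-nullhomotopic and non-peripheral) in $\Sigma$. At most one of them can be nullhomotopic in $\Sigma$, and that case is trivial since the other curve remains essential in $\Sigma'$ by $\pi_1$-injectivity.

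For the case where both $\alpha$ and $\beta$ are essential in $\Sigma$, an isotopy in $\Sigma'$ between them would produce an embedded annulus $A \subset \Sigma'$ with $\partial A = f(\alpha) \cup f(\beta)$. I would put $A$ in general position with $\partial\Sigma$ and run the standard innermost-disk / outermost-arc simplification: incompressibility of $\Sigma$ and of $\overline{\Sigma}$, together with the fact that $A$ is an annulus, allows removal of all circles and arcs of $A \cap \partial\Sigma$, ultimately forcing $A \subset \Sigma$ and contradicting nonisotopy in $\Sigma$. For the case where at least one curve is peripheral, the question reduces to whether two distinct boundary circles $\partial_i$ and $\partial_j$ (viewed as essential disjoint curves in $\Sigma'$) can be isotopic in $\Sigma'$; they can only if they cobound an annulus, and a short Euler characteristic check using $2g+b\geq 3$ rules this out in each of the subcases ($b=1, b=2, b\geq 3$).

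The main obstacle will be handling the peripheral and mixed cases cleanly and running the innermost-disk argument carefully; the other steps are essentially bookkeeping once the doubling is set up.
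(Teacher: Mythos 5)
Your construction — doubling $\Sigma$ along $\partial\Sigma$ — is exactly the one used in the paper, and your genus computation $g' = 2g+b-1 \geq 2$ is correct. Where you diverge is in the verification that $f(\alpha)$ and $f(\beta)$ remain nonisotopic in $\Sigma'$. You set up a multi-case analysis (essential vs.\ peripheral vs.\ nullhomotopic) and, in the main case, propose an innermost-disk/outermost-arc argument on a cobounding annulus. This can probably be pushed through, but it is far more machinery than the problem needs, and as written it has loose ends: an isotopy between two simple closed curves does not immediately hand you an embedded annulus with the two curves as boundary (you first need them to be disjoint, essential, and homotopic, then invoke the bigon/annulus criterion); and your peripheral case, as you have phrased it, only treats the situation where both curves are boundary-parallel, not the mixed case where one is peripheral and the other is essential non-peripheral in $\Sigma$.

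The observation that short-circuits all of this is that the double comes equipped with a retraction $r : \Sigma' \to \Sigma$, given by $r\bigl(\overline{(x,i)}\bigr) = x$, which satisfies $r \circ f = \mathrm{id}_{\Sigma}$. If $f(\alpha)$ and $f(\beta)$ were isotopic in $\Sigma'$, they would in particular be freely homotopic there, and pushing the homotopy forward by $r$ shows that $\alpha$ and $\beta$ are freely homotopic in $\Sigma$. Since homotopic simple closed curves on a surface are isotopic (\cite[Proposition 1.10]{FarbMargalitPrimer}), this contradicts the hypothesis. No incompressibility, annulus surgery, or case analysis is needed; the retraction handles everything uniformly. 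This is the route the paper takes, and I would encourage you to look for such retractions whenever you embed a surface as one half of a double.
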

\begin{proof}
Let $\Sigma'$ be the double of $\Sigma$, i.e.\ let 
$\Sigma' = (\Sigma \times \{0\} \cup \Sigma \times \{1\})/ \sim$,
where we identify $(x, 0) \sim (x, 1)$ for all $x \in \partial \Sigma$. 
Then $\Sigma'$ is a closed orientable surface of genus at least $2$.  Moreover, 
there is a retraction $r: \Sigma' \to \Sigma$ defined by $r( \overline{(x, i)}) = x$.

Suppose for the sake of contradiction that $f(\alpha)$ and $f(\beta)$ were isotopic curves on $\Sigma'$. 
Then $\alpha = r(f(\alpha))$ is homotopic to $\beta = r(f(\beta))$ on $\Sigma$.  Since homotopic
simple closed curves on a surface are isotopic (see, e.g., \cite[Proposition 1.10]{FarbMargalitPrimer}),
this is a contradiction.
\end{proof}

\begin{proof}[Proof of Theorem \ref{theorem:isectbdry}]
Let $\Sigma'$ and $f$ be as in Lemma \ref{lemma:embedinclosed} and let 
$f_*$ be the induced homomorphism on $\pi_1$.
Then $f_*(\gamma_d(\pi_1(\Sigma))) \subseteq \gamma_d(\pi_1(\Sigma'))$, and so there is an induced map
$\pi_1(\Sigma)/\gamma_d(\pi_1(\Sigma)) \to \pi_1(\Sigma')/\gamma_d(\pi_1(\Sigma'))$. This implies
that $f(\alpha)$ and $f(\beta)$ have the same conjugacy class in $\pi_1(\Sigma')/\gamma_d(\pi_1(\Sigma'))$.
Moreover, it is clear that $i(\alpha, \beta) \geq i(f(\alpha), f(\beta))$, and from the lemma $f(\alpha)$
and $f(\beta)$ are nonisotopic. By Theorem \ref{maintheorem:isect}, 
\[i(\alpha, \beta) \geq i(f(\alpha), f(\beta)) \geq \left( \frac{d+2}{2}\right)^c.\qedhere\]
\end{proof}

\noindent
\begin{tabular*}{\linewidth}[t]{@{}p{\widthof{E-mail: {\tt justinmalestein@math.huji.ac.il}}+0.75in}@{}p{\linewidth - \widthof{E-mail: {\tt justinmalestein@math.huji.ac.il}} - 0.75in}@{}}
{\raggedright
Justin Malestein\\
Einstein Institute of Mathematics \\
Edmond J. Safra Campus, Givat Ram \\
The Hebrew University of Jerusalem \\
Jerusalem, 91904, Israel\\
E-mail: {\tt justinmalestein@math.huji.ac.il}}
&
{\raggedright
Andrew Putman\\
Department of Mathematics\\
Rice University, MS 136 \\
6100 Main St.\\
Houston, TX 77005\\
E-mail: {\tt andyp@rice.edu}}
\end{tabular*}

\end{document}